\newcommand{\R}{\mathbb{R}}
\newcommand{\N}{\mathbb{N}}
\newcommand{\E}{\mathcal{E}}
\newcommand{\Ms}{\mathcal{M}}
\newcommand{\Es}{\mathcal{E}_H}
\newcommand{\W}{\mathcal{W}}
\newcommand{\A}{\mathcal{A}}
\renewcommand{\S}{\mathcal{S}}
\newcommand{\Rn}{{\R^n}}
\newcommand{\Ha}{\mathcal{H}}
\newcommand{\LL}{\mathcal{L}}
\newcommand{\Chi}{\mathcal{X}}
\newcommand{\eps}{\varepsilon}
\newcommand{\Laplace}{\Delta}
\renewcommand{\phi}{\varphi}
\newcommand{\dx}{\mathrm{d}x}
\newcommand{\abs}[1]{\left| #1 \right|}
\newcommand{\dH}{\mathrm{d}\mathcal{H}}
\newcommand{\dist}{\operatorname{dist}}
\newcommand{\sdist}{\operatorname{sdist}}
\newcommand{\sgn}{\operatorname{sgn}}
\newcommand{\spt}{\operatorname{spt}}
\newcommand{\Urep}{{\mathcal{U}_{r(\eps)}}}
\newtheorem{theorem}{Theorem}
\newtheorem{lemma}[theorem]{Lemma}
\newtheorem{prop}[theorem]{Proposition}
\title[A phase field model for a constrained Willmore energy]{A phase field model for the optimization of the Willmore energy in the class of connected surfaces}
\author{Patrick W.~Dondl}
\address{Patrick W.~Dondl, Durham University,
Science Laboratories,
South Rd,
Durham DH1 3LE,
United Kingdom}
\email{patrick.dondl@durham.ac.uk}
\author{Luca Mugnai}
\address{Luca Mugnai, Max Planck Institute for Mathematics in the
  Sciences, Inselstr. 22, D-04103 Leipzig}
\email{mugnai@mis.mpg.de}
\author{Matthias R{\"o}ger}
\address{Matthias R{\"o}ger, Technische Universit\"at Dortmund,
Fakult{\"a}t f{\"u}r Mathematik,
Vogelpothsweg 87,
D-44227 Dortmund}
\email{matthias.roeger@tu-dortmund.de}
\date{\today}                                          
\subjclass[2000]{49Q10; 74G65}
\begin{document}
\begin{abstract}
We consider the problem of minimizing the Willmore energy connected surfaces with prescribed surface area which are confined to a finite container. To this end, we approximate the surface by a phase field function $u$ taking values close to $+1$ on the inside of the surface and $-1$ on its outside. The confinement of the surface is now simply given by the domain of definition of $u$. A diffuse interface approximation for the area functional, as well as for the Willmore energy are well known. We address the topological constraint of connectedness by a nested minimization of two phase fields, the second one being used to identify connected components of the surface. In this article, we provide a proof of Gamma-convergence of our model to the sharp interface limit.
\end{abstract}
\maketitle

\section{Introduction}
In many applications structures can be described as (local) minimizer of suitable bending energies. The most prominent example is the variational characterization of shapes of biomembranes by the use of Helfrich-type functionals of the form
\begin{align}
  \Es(\Sigma) \,&=\, \int_\Sigma k_1 (H-H_0)^2\,d\Ha^{2}\,
  +\,\,\int_\Sigma  {k}_2 K\,d\Ha^{2}, \label{eq:E-sharp}
\end{align}
where $\Sigma$ denotes a surface in $\R^3$,
$H$ and $K$ its mean and Gaussian curvature, and where the bending moduli $k_1,k_2$ and the spontaneous curvature $H_0$ are in the simplest case constant. Under given constraints on enclosed volume and surface area minimizers in the class of sphere-type surfaces agree well with typical shapes of biological cells. In the case of zero spontaneous curvature and if the Gaussian curvature term is neglected, $\E_H$ reduces to the well-known Willmore functional.

Whereas the restriction to topological spheres is natural in many applications it is sometimes more reasonable to consider the class of orientable connected surfaces of arbitrary genus instead. For example the inner membrane of mitochondria cells shields the inside matrix from the outside but shows -- in contrast to old textbook illustrations -- a lot of of handle-like junctions~\cite{Mannella:2001gt} and therefore represents a higher genus surface. In this example another natural constraint comes into play, given by the confinement of the inner membrane to a `container' that is given by the outer membrane of the mitochondria.

This motivates to consider the following variational problem:\\[2mm]
\text{}\hfill\emph{
\begin{minipage}{0.9\textwidth}
	Minimize $\Es$ in the class of all compact, connected, orientable surfaces without boundary that are embedded in $\Omega\subset \R^3$ and have prescribed surface area $S$.
\end{minipage} 
}
\\[2mm]
Such optimization problems for curvature energies are in general difficult and only a few rigorous results are available. Simon \cite{Simo93} proves the existence of minimizers for the Willmore energy for tori in $\R^3$. This result was extended  to surfaces with arbitrary prescribed genus \cite{BaKu03} and to surfaces of sphere-type with prescribed isoperimetric ratio \cite{Schy11}. 

Instead of solving the problem in its original formulation we here propose an approximation by a phase field type energy. Such a formulation is very well suited for numerical investigations and has been used extensively in similar problems (\textit{e.g.} \cite{DuWang04, DuWang05,  DuWang07, CampHern, CampHern2}). In such an approach the confinement condition is easily imposed and approximations of the Willmore and Helfrich energy are well-known. The key challenge is therefore to appropriately translate the condition that the surfaces (in the `sharp interface formulation') should be connected. We take care of this constraint by a nested minimization, one that is able to detect if multiple separated components of the boundary exist.

Our approach builds on an previous article by the authors where we have considered the confined Willmore minimization under a topological constraint in two dimensions\cite{DoMR11}. A similar idea of a nested minimization was already proposed in that paper. However, the approach presented here is easier to implement and can be applied in the physically relevant case of three dimensions. An interesting, related approach is presented in~\cite{Alexandr05}, where the authors propose using a logarithmic barrier method in a level set approach in order to prevent topological changes. A method of tracking topological changes was introduced in~\cite{Du:2007ux}, where the authors introduce a diffuse analogue to the Euler number and consider its change with time. This approach could be fruitfully combined with ours in order to study the topological transitions of connected and constrained  surfaces undergoing a gradient flow with respect to the Willmore energy.

\section{The sharp interface minimization problem}
Consider $n=2$ or $n=3$ and let $\Omega\subset \Rn$ be a given open bounded set with smooth boundary. Denote by $\Ms$ the set of all compact, connected, orientable $C^2$ surfaces without boundary that are embedded in $\Omega$ and have prescribed surface area $S$, where $S>0$ is a given constant. We associate to $\Sigma\in\Ms$ the enclosed inner region $U_\Sigma\subset\R^n$ and denote by ${H}$ the mean curvature vector of $\Sigma$.

In the following we will consider the Willmore energy
\begin{align}
  \W(\Sigma) \,&:=\, \int_\Sigma |H|^2\,d\Ha^{n-1}, \label{eq:def-W}
\end{align}
which is a special case of the general Helfrich-type energy $\Es$. Our interest is in the minimization of the Willmore energy in the class $\Ms$. Optimal structures, i.e. limit points (in a suitable sense) of minimizing sequences, can be expected to have touching points with the boundary $\partial\Omega$ of the container or points of self-contact. Therefore such structures are not necessarily embedded surfaces and do in general not belong to $\Ms$. The following proposition gives some information about limit points of sequences in $\Ms$ that are uniformly bounded in Willmore energy.
\begin{prop}\label{prop:min-seq}
Consider a sequence $(\Sigma_k)_{k\in\N}$ in $\Ms$, let $E_k$ be the associated inner sets, and assume 
\begin{align}
	\sup_{k\in\N} \W(\Sigma_k) \,<\, \infty. \label{eq:bound}
\end{align}
Then there exists a subsequence $k\to\infty$, a set $E\subset \bar{\Omega}$, and a Radon measure $\mu$ on $\Rn$ such that
\begin{align}
	\Chi_{E_k} \,\to\, \Chi_E \quad\text{ in }L^1(\Rn), \label{eq:min-seq1}\\
	\Ha^{n-1}\lfloor \Sigma_k\,\to\, \mu \quad\text{ as  varifolds}, \label{eq:min-seq2}\\
	\Chi_{E} \,\in\, BV(\Rn), \label{eq:min-seq1.1}\\
	\mu(\Rn) \,=\, S, \label{eq:min-seq3}\\
	\mu \,\geq\, |\nabla\Chi_E|, \label{eq:min-seq4}\\
	\mu \text{ is an $(n-1)$-dimensional integral varifold} \nonumber \\
	\text{with weak mean curvature }\vec{H}\in L^2(\mu)\text{ and multiplicity $\theta(\mu,\cdot)$}.
\end{align}
Moreover we have
\begin{align}
	\int |\vec{H}|^2 \,d\mu \,\leq\, \liminf_{k\to\infty} \W(\Sigma_k). \label{eq:min-seq5}
\end{align}
We define $\Ms_0$ to be the class of all pairs $(E,\mu)$ of sets in $E\subset \overline{\Omega}$ and Radon measures $\mu$ on $\Rn$ such that there exists a sequence $(\Sigma_k)_{k\in\N}$ in $\Ms$ with \eqref{eq:min-seq1}, \eqref{eq:min-seq2}.
\end{prop}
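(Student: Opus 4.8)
\medskip

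\noindent\emph{Proof strategy.}\ The statement is a compactness assertion, and the plan is to combine two classical compactness theorems --- one in $BV$ for the inner sets $E_k$, one in the space of integral varifolds for the surface measures $\Ha^{n-1}\lfloor\Sigma_k$ --- and then to transfer the curvature bound to the limit by a lower semicontinuity argument. After each of the steps below we pass to a further (not relabelled) subsequence.

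Begin with the geometric bookkeeping. Since $\Sigma_k\in\Ms$ is a compact, connected, embedded $C^2$ hypersurface without boundary, the Jordan--Brouwer separation theorem shows that $\Rn\setminus\Sigma_k$ has exactly one bounded component, namely $E_k=U_{\Sigma_k}$, and that $\partial E_k=\Sigma_k$ with $|\nabla\Chi_{E_k}|=\Ha^{n-1}\lfloor\Sigma_k=:\mu_k$ as Radon measures; in particular $|\nabla\Chi_{E_k}|(\Rn)=S$. As $E_k\subset\Omega$ and $\Omega$ is bounded, $(\Chi_{E_k})_k$ is bounded in $BV(\Rn)$ and supported in the fixed compact set $\overline\Omega$, so by the $BV$ compactness theorem a subsequence converges in $L^1(\Rn)$ to some $\Chi_E$ with $E\subset\overline\Omega$ (an $L^1$-limit of characteristic functions is again a characteristic function), which gives \eqref{eq:min-seq1}, and lower semicontinuity of the total variation yields $\Chi_E\in BV(\Rn)$, i.e.\ \eqref{eq:min-seq1.1}. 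On the varifold side $\mu_k(\Rn)=S$ and $\spt\mu_k\subset\overline\Omega$, so a further subsequence satisfies \eqref{eq:min-seq2} for some Radon measure $\mu$ with $\spt\mu\subset\overline\Omega$; testing the convergence \eqref{eq:min-seq2} against a continuous cutoff that equals $1$ on $\overline\Omega$ gives $\mu(\Rn)=S$, which is \eqref{eq:min-seq3}. Finally, for $X\in C^1_c(\Rn;\Rn)$ we have $\int X\cdot\,d\nabla\Chi_E=\lim_{k}\int X\cdot\,d\nabla\Chi_{E_k}$ (because $\Chi_{E_k}\to\Chi_E$ in $L^1$) together with $|\int X\cdot\,d\nabla\Chi_{E_k}|\le\int|X|\,d\mu_k\to\int|X|\,d\mu$, hence $|\nabla\Chi_E|\le\mu$, which is \eqref{eq:min-seq4}.

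It remains to prove that $\mu$ is an integral varifold with weak mean curvature in $L^2(\mu)$ and that \eqref{eq:min-seq5} holds. Because $\Sigma_k$ is closed, the tangential divergence theorem shows that the first variation of the unit-multiplicity varifold $V_k$ induced by $\Sigma_k$ has no boundary term, $\delta V_k(X)=-\int_{\Sigma_k}\vec H\cdot X\,d\Ha^{n-1}$ for $X\in C^1_c(\Rn;\Rn)$, so by the Cauchy--Schwarz inequality $\|\delta V_k\|(\Rn)\le\W(\Sigma_k)^{1/2}S^{1/2}$, which is uniformly bounded by \eqref{eq:bound}. Allard's integral compactness theorem --- applicable since the $V_k$ are integral with uniformly bounded mass and uniformly bounded first variation --- then shows that the limit varifold is an integral $(n-1)$-varifold with locally bounded first variation, so $\mu$ carries an integer multiplicity $\theta(\mu,\cdot)$. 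To identify the curvature, note that varifold convergence forces the first variations to converge as vector-valued distributions, $\delta V_k\to\delta V$, so along a further subsequence $\vec H_k\mu_k$ converges weakly-$\ast$ to a vector Radon measure $\tau$ with $\delta V=-\tau$. Integrating the pointwise inequality $2\,X\cdot\vec H_k-|X|^2\le|\vec H_k|^2$ against $\mu_k$ and letting $k\to\infty$ gives $2\int X\cdot\,d\tau-\int|X|^2\,d\mu\le\liminf_{k\to\infty}\W(\Sigma_k)$ for every $X\in C^0_c(\Rn;\Rn)$; a routine scaling-and-density argument (replace $X$ by $tX$ and let $t\to\infty$ to obtain $\tau\ll\mu$, then approximate $\vec H\,\Chi_{B_R}$ in $L^2(\mu)$ by continuous fields and let $R\to\infty$) then yields $\tau=\vec H\mu$ with $\vec H\in L^2(\mu)$ and $\int|\vec H|^2\,d\mu\le\liminf_{k\to\infty}\W(\Sigma_k)$. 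Since $\delta V=-\vec H\mu$, this $\vec H$ is precisely the weak mean curvature of $\mu$, establishing \eqref{eq:min-seq5}.

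I expect the only delicate point to be this last step. It rests on two ingredients that must be invoked with care: the integrality of the limit varifold, which is exactly what Allard's compactness theorem provides once the uniform first-variation bound coming from the Willmore energy is in place, and the lower semicontinuity of the $L^2$-norm of the mean curvature under varifold convergence, which is the completing-the-square computation sketched above (a Hutchinson-type semicontinuity of the Willmore functional). The remaining ingredients --- $BV$ compactness, the identity $|\nabla\Chi_{E_k}|=\Ha^{n-1}\lfloor\Sigma_k$ for smooth $\Sigma_k$, and the bookkeeping of supports inside the compact set $\overline\Omega$ --- are entirely routine.
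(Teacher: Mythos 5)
Your proof is correct and follows the same route the paper merely cites in one line (``BV-compactness and Allard's compactness''): you invoke $BV$-compactness for the inner sets, Allard's integral compactness for the varifolds via the uniform first-variation bound $\|\delta V_k\|(\Rn)\le\W(\Sigma_k)^{1/2}S^{1/2}$, and then identify the limit mean curvature and obtain \eqref{eq:min-seq5} by the standard duality/completing-the-square lower semicontinuity argument. The only thing you add beyond the paper's terse citation is that last semicontinuity step, which is indeed needed and which you carry out correctly.
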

\begin{proof}
The conclusions follow from the BV-compactness Theorem and Allard's compactness Theorem.
\end{proof}
Below we will prove the existence of a recovery sequence of diffuse approximations for the smaller class of limit structures that can be approximated \emph{in energy}, more precisely the class
$\Ms_1$ of all pairs $(E,\mu)\in \Ms_0$ such that  there exists a sequence $(\Sigma_k)_{k\in\N}$ in $\Ms$ with \eqref{eq:min-seq1}-\eqref{eq:min-seq2} and
\begin{align}
	\W(\mu) := \int |\vec{H}|^2 \,d\mu \,=\, \lim_{k\to\infty} \W(\Sigma_k). \label{eq:app-energy}
\end{align}
In space dimension $n=2$ the classes $\Ms_0$ and $\Ms_1$ in fact coincide, see \cite[Proposition 2.2]{DoMR11}. Whether such a property also holds in three space dimensions (at least for limits of minimizing sequences) is at present an open question, that we do not address here. 
\section{Diffuse interface approximation and main results}
In this section we formulate the phase field approximation of our minimization problem and introduce our main results. In the following we consider $n\in \{2,3\}$ and an open, bounded set $\Omega \subset \R^n$ with smooth boundary that represents the confinement condition. We are interested in an approximation of our minimization problem in the class of smooth phase fields $u\colon \Rn \to \R$ that satisfy a boundary condition
\begin{align}
	u\,=\, -1 \quad\text{ on }\Rn\setminus\Omega \label{eq:pf-conf}
\end{align}
(alternatively one could prescribe clamped boundary conditions on $\partial\Omega$).
For $\eps>0$ we choose as an approximation of the area functional 
the well-known Modica-Mortola functional
\begin{align}
	\S_\eps(u) := \frac{1}{c_0}\int_\Omega \Big(\frac{\eps}{2} \abs{\nabla u}^2 + \frac{1}{\eps}W(u)\Big) \,\dx,
\end{align}
where $W$ is a suitable double well potential that we here chose as $W(r)=\frac{1}{4}(1-r^2)^2$ with the scaling factor $c_0:= \int_{-1}^1\sqrt{2W(s)}\,\mathrm{d}s$ fixed such that $\S_\eps$ Gamma-converges for $\eps\to 0$ to the area functional, see \cite{Modi87}. To a phase field $u$ we associate the diffuse interface measures
\begin{gather}
  \mu_\eps \,:=\, \frac{1}{c_0}\Big(\frac{\eps}{2}|\nabla u_\eps|^2 + \frac{1}{\eps}W(u_\eps)\Big)\,\dx \label{eq:def-mu}
\end{gather}

We will restrict ourselves in the following to the Willmore functional as the relevant bending energy and consider the following diffuse analogue
\begin{gather}
	\W_\eps(u)\,:=\,  \frac{1}{c_0}\int_{\Omega} \frac{1}{\eps} \left( \eps \Laplace u - \frac{1}{\eps} W'(u) \right)^2 \,\dx. \label{eq:def-Weps}
\end{gather}
In space dimensions $n=2,3$ the energy $\W_\eps$ Gamma-converges at regular limit points to the Willmore energy \cite{BeMu05,RoeSc06}.\\
To detect if multiple connected components are present we define for given $u\in W^{2,2}(\Omega)$ a functional
$\A_{u,\eps}:W^{1,2}(\Omega)\,\to\, \R$,
\begin{align}
	\A_{u,\eps} (\phi) := \abs{\int_\Omega  \frac{1}{\eps} \widetilde{W}(u) \phi \,\dx} + 
         \int_\Omega  \left( \frac{9}{\eps^{\frac{3}{2}}} \widetilde{W}(\overline{\lambda}u) + \eps \right) W(\phi) \,\dx +
\int_\Omega \left(\frac{8}{\eps^{\frac{3}{2}}} \widetilde{W}(\overline{\lambda}u)+\eps\right)  \abs{\nabla \phi}^2 \,\dx.\label{eq:def-Aeps}
\end{align}
Here the function $\widetilde{W}\in C_c^\infty(\R)$ and $\overline{\lambda}$ are chosen such that
\begin{align}
	\widetilde{W}(s) = \left\{\begin{array}{ll} 
		1 & \textrm{if $\abs{s}\le 1-\lambda$} \\
		0 & \textrm{if $\abs{s}>\overline{\lambda}^{-1}(1-\lambda)$} \\ 
		\textrm{monotone } & \textrm{otherwise}
	\end{array}\right.
\end{align}
for $0<\lambda<1$ sufficiently small\footnote{As shown in the proof of Theorem~\ref{thm:lower}, a possible choice for $\lambda$ is $\frac{1}{61}$.} and $\overline{\lambda} < 1$ such that $\overline{\lambda}^{-1}(1-\lambda) < 1$. 
Note that the constants above have been chosen for notational convenience -- the important aspect in the functional above is the scaling relationship between the individual terms.

The idea is to penalize the difference between the infimum of $\A_{u,\eps}$ and the value of $\A_{u,\eps}$ for $\phi=1$. The reasoning behind is that if two isolated components are given, then one could choose $\phi = 1$ and $\phi=-1$ in suitable neighborhoods of the components, achieving a smaller value than for the choice $\phi=1$.  In contrast, if the diffuse interface only consists of one component any gain achieved in the first term of $\A_{u,\eps}$ will be expensive by the combination of the second and third term in \eqref{eq:def-Aeps}. Finally the additional $\eps$-term in the penalization of the gradient ensures that $\A_{u,\eps}$ is coercive in $W^{1,2}(\Omega)$, which is convenient for numerical implementation.

Putting together all contributions we finally obtain the total energy functional.
Let $S>0$ be fixed, take $\eps>0$ and $0<\sigma<1$. We then define for $u\in W^{2,2}(\Omega)$
\begin{align}
	\E_\eps(u) \,:=\, &\W_\eps(u) + \frac{1}{\eps^{1-\sigma}} \left( \S_\eps(u) - S \right)^2 + \notag\\
	&+\frac{1}{\eps^{1/2-\sigma/2}} \left( \inf_{\phi\in W^{1,2}(\Omega)} \A_{u,\eps}(\phi)- \int_\Omega \frac{1}{\eps} \widetilde{W}(u) \right)^2. \label{eq:def-Eeps}
\end{align}
We then consider the minimization of $\E_\eps(u)$ for $u\in W^{2,2}(\Omega)$ subject to the boundary conditions~\eqref{eq:pf-conf}.

From a practical point of view, the two-field minimization problem of course yields some difficulties for implementation. It is necessary that an \emph{absolute} minimizer $\varphi$ for the non-convex functional $\A_{u,\eps}$ is found---and, at least for some configurations, we expect a non-separating $\phi$ to still be a local minimizer. One possible way of overcoming such a problem could be by a method similar to the one introduced by~\cite{Bourdin:2007jz} (who are also dealing with a model incoroporating two-phase fields). In our case, we would add a growing coefficient in front of the first term of $\A_{u,\eps}$ until the uniform (i.e., non-separating) $\phi$ loses stability as a minimizer and then use the resulting $\phi$ as a starting point to find the minimizer for the original $\A_{u,\eps}$. An implementation is work in progress.

The main contribution of this paper is the justification of the approximation property in form of a Gamma convergence result for $\E_\eps$.
The major difficulty thereby is the treatment of the  $\A_{u,\eps}$ term.

We first show that the respective functionals admit minimizers.
\begin{prop}
The functional $\A_{u,\eps}$  admits, for any
$u\in W^{2,2}(\Omega)$, a minimizer in $W^{1,2}(\Omega)$.
The functional $\E_\eps$ admits a minimizer in the class of functions $W^{2,2}(\Omega)$ subject to boundary condition~\eqref{eq:pf-conf}.
\end{prop}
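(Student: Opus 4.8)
The plan is to establish both existence statements by the direct method of the calculus of variations, treating the two functionals in turn. For $\A_{u,\eps}$ with $u\in W^{2,2}(\Omega)$ fixed, I would first observe that $\A_{u,\eps}(\phi)\geq 0$ for all $\phi\in W^{1,2}(\Omega)$, so $m:=\inf_\phi \A_{u,\eps}(\phi)$ is a finite nonnegative number, and take a minimizing sequence $(\phi_j)_j$. The key point is the coercivity built into the last two terms of \eqref{eq:def-Aeps}: since the weight in front of $\abs{\nabla\phi}^2$ is bounded below by $\eps>0$ and the weight in front of $W(\phi)=\frac14(1-\phi^2)^2$ is bounded below by $\eps>0$, a uniform bound on $\A_{u,\eps}(\phi_j)$ yields uniform bounds on $\eps\int_\Omega \abs{\nabla\phi_j}^2$ and on $\eps\int_\Omega W(\phi_j)$; the latter controls $\int_\Omega \phi_j^2$ by Young's inequality (from $(1-\phi^2)^2\geq \tfrac12\phi^4 - 1 \geq c\phi^2 - C$ on all of $\R$), so $(\phi_j)_j$ is bounded in $W^{1,2}(\Omega)$. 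Hence a subsequence converges weakly in $W^{1,2}(\Omega)$ and strongly in $L^2(\Omega)$ (and, up to a further subsequence, a.e.) to some $\phi_\infty\in W^{1,2}(\Omega)$. It then remains to check lower semicontinuity of each term: the gradient terms $\int_\Omega (\tfrac{8}{\eps^{3/2}}\widetilde W(\overline\lambda u)+\eps)\abs{\nabla\phi}^2$ are weakly lower semicontinuous since the weight is a fixed nonnegative $L^\infty$ function and the integrand is convex in $\nabla\phi$; the potential term $\int_\Omega(\tfrac{9}{\eps^{3/2}}\widetilde W(\overline\lambda u)+\eps)W(\phi)$ passes to the limit (in fact converges) by a.e.\ convergence together with a bound — one should note $W(\phi_j)$ need not be dominated directly, but Fatou's lemma gives lower semicontinuity, which is all that is needed; and the first term $\abs{\int_\Omega \tfrac1\eps \widetilde W(u)\phi\,\dx}$ is continuous under $L^2$-convergence since $\widetilde W(u)\in L^\infty(\Omega)\subset L^2(\Omega)$. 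Therefore $\A_{u,\eps}(\phi_\infty)\leq \liminf_j \A_{u,\eps}(\phi_j) = m$, so $\phi_\infty$ is a minimizer.

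For $\E_\eps$, I would argue similarly in the admissible class $\mathcal{A}:=\{u\in W^{2,2}(\Omega): u=-1 \text{ on }\R^n\setminus\Omega \text{ in the trace sense}\}$, which is nonempty (e.g.\ a smooth function matching the boundary data) and weakly closed in $W^{2,2}(\Omega)$. Since $\E_\eps\geq 0$, pick a minimizing sequence $(u_j)_j\subset\mathcal{A}$. The crucial bound comes from the term $\W_\eps(u_j)$, which controls $\frac1\eps\int_\Omega(\eps\Delta u_j - \tfrac1\eps W'(u_j))^2$; combined with the area-penalty term $\tfrac1{\eps^{1-\sigma}}(\S_\eps(u_j)-S)^2$, which forces $\S_\eps(u_j)$ to stay bounded and hence bounds $\eps\int_\Omega\abs{\nabla u_j}^2$ and $\tfrac1\eps\int_\Omega W(u_j)$. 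The $L^2$-bound on $W'(u_j)=\phi$-type terms, together with the bound on $\eps\Delta u_j - \tfrac1\eps W'(u_j)$, gives an $L^2$-bound on $\Delta u_j$; elliptic regularity (with the boundary condition, or after subtracting a fixed lift) then yields a uniform $W^{2,2}(\Omega)$ bound on $u_j$. Extract a subsequence converging weakly in $W^{2,2}(\Omega)$, strongly in $W^{1,2}(\Omega)$ and in $C^0$ if $n\leq 3$ (Rellich–Kondrachov, since $W^{2,2}\hookrightarrow\hookrightarrow C^0$ for $n\leq 3$), to some $u_\infty\in\mathcal{A}$. Lower semicontinuity of $\W_\eps$ follows from weak $L^2$-convergence of $\Delta u_j$ and strong $L^2$-convergence of $W'(u_j)$ (the latter from $C^0$ or $L^2$ convergence of $u_j$, using local Lipschitz continuity of $W'$), making $\eps\Delta u_j - \tfrac1\eps W'(u_j)$ converge weakly in $L^2$, whence convexity of the square gives lower semicontinuity. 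The area term is continuous along the subsequence since $\nabla u_j\to\nabla u_\infty$ strongly in $L^2$ and $W(u_j)\to W(u_\infty)$ (uniform or $L^1$ convergence).

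The main obstacle is the third term of $\E_\eps$ in \eqref{eq:def-Eeps}, namely $\tfrac1{\eps^{1/2-\sigma/2}}\big(\inf_{\phi}\A_{u,\eps}(\phi) - \int_\Omega\tfrac1\eps\widetilde W(u)\big)^2$, because it depends on $u$ through the value function $u\mapsto \inf_\phi \A_{u,\eps}(\phi)$, which is defined as an infimum of a family of functionals whose coefficients involve $\widetilde W(\overline\lambda u)$ and $\widetilde W(u)$. One needs (lower semi)continuity of this value function along the minimizing sequence $u_j\to u_\infty$. Here the $C^0$-convergence $u_j\to u_\infty$ (valid since $n\leq 3$) is what saves the day: $\widetilde W(\overline\lambda u_j)\to\widetilde W(\overline\lambda u_\infty)$ and $\widetilde W(u_j)\to\widetilde W(u_\infty)$ uniformly on $\Omega$ (as $\widetilde W\in C^\infty_c$ is Lipschitz), so all coefficients in $\A_{u_j,\eps}$ converge uniformly to those in $\A_{u_\infty,\eps}$; a standard $\Gamma$-type argument (or direct estimate: bound a near-optimal $\phi$ for $\A_{u_\infty,\eps}$, plug it into $\A_{u_j,\eps}$, and control the error by the uniform coefficient convergence, and conversely) then shows $\inf_\phi\A_{u_j,\eps}(\phi)\to\inf_\phi\A_{u_\infty,\eps}(\phi)$, and likewise $\int_\Omega\tfrac1\eps\widetilde W(u_j)\to\int_\Omega\tfrac1\eps\widetilde W(u_\infty)$. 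Since $t\mapsto t^2$ is continuous, the whole third term converges, and in particular is lower semicontinuous. Combining the lower semicontinuity of all three terms yields $\E_\eps(u_\infty)\leq\liminf_j\E_\eps(u_j)$, so $u_\infty$ is a minimizer, completing the proof.
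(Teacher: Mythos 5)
Your proposal is correct and follows essentially the same route as the paper: direct method for both functionals, with the key point being continuity of the value map $u\mapsto\inf_\phi\A_{u,\eps}(\phi)$ established via the compact embedding $W^{2,2}(\Omega)\hookrightarrow C^0(\overline\Omega)$ for $n\le 3$ and uniform control of the minimizing $\phi$'s $L^1$-quantities. You are somewhat more explicit than the paper about coercivity (the $W^{1,2}$ bound for $\A_{u,\eps}$ and the $W^{2,2}$ bound for $\E_\eps$ via elliptic regularity), while the paper spells out the Lipschitz estimate on the value function in a bit more detail; both arguments match in substance.
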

\begin{proof}
For fixed $u\in W^{2,2}(\Omega)$, the direct method of the calculus of variations yields existence of a minimizer for the functional $\A_{u,\eps}$. We claim that the function
\begin{align*}
A \colon C^0 &\to \R \\
u &\mapsto  \inf_{\phi\in W^{1,2}(\Omega)} \A_{u,\eps}(\phi)
\end{align*}
is continuous. In oder to see this, consider $u_1, u_2 \in W^{2,2} \subset C^0$ and assume without loss of generality that $\inf_{\phi\in W^{1,2}(\Omega)} \A_{u_1,\eps}(\phi) \le \inf_{\phi\in W^{1,2}(\Omega)} \A_{u_2,\eps}(\phi)$. Denote by $\phi_1$ the minimizer of  $\A_{u_1,\eps}$. Plugging $\phi_1$ into $\A_{u_2,\eps}$ yields
$$
\A_{u_2,\eps}(\phi_1) \le \inf_{\phi\in W^{1,2}(\Omega)} \A_{u_1,\eps}(\phi) + C(\eps, \Omega)||u_1-u_2||_\infty \left( || \phi_1 ||_{L^1} + || W(\phi_1) ||_{L^1} + || |\nabla(\phi_1)|^2 ||_{L^1} \right).
$$
Noting that all the $L^1$-norms on the right hand side are bounded uniformly for any minimizer $\phi_1$ of $\A_{u_1,\eps}$ regardless of $u_1$\footnote{To see this, compare with $\phi \equiv 0$ as a test function to get a uniform upper bound for  $\inf_{\phi\in W^{1,2}(\Omega)} \A_{u_1,\eps}(\phi)$. This yields a bound for $||\eps W(\phi||_{L^1}$ and  $||\eps \nabla \phi||_{L^1}$. A bound for $|| \phi||_{L^1}$ follow immediately.} by a constant depending also only on $\eps$ and $\Omega$, continuity, in fact even Lipschitz continuity follows.

Using again the direct method of the calculus of variations for the functional $\E_\eps$ we see that a subsequence of a minimizing sequence converges weakly in $W^{2,2}$, thus one can extract another subsequence which strongly converges in $C^{0,\alpha}$ for some $\alpha>0$. This (together with another  compact embedding in $W^{1,p}$ for $p<6$) ensures lower semicontinuity of all lower order terms in the functional.
\end{proof}

Our main results are a lower and upper bound for the above diffuse approximations of our minimization problem. 
\begin{theorem}[Upper bound]
\label{thm:main}
Let an arbitrary $(E,\mu)\in \Ms_1$ be given and set $u := 2\chi_E-1$. Then there exists a sequence of smooth phase fields $u_\eps: \Rn\to\R$ with \eqref{eq:pf-conf} such that
\begin{align}
	u_\eps \,&\to\, u\qquad\text{ in }L^1(\Rn), \label{eq:limsup1}\\
	\mu_\eps \,&\to\, \mu \qquad\text{ as Radon measures} \label{eq:limsup2}
\end{align}
and such that
\begin{align}
	 \lim_{\eps \to 0} \E_\eps(u_\eps) \,=\, \W(\mu). \label{eq:limsup3}
\end{align}
\end{theorem}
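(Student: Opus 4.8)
The plan is a classical recovery-sequence construction combined with a diagonal argument; the one genuinely delicate point is the analysis of the nested penalty $\A_{u,\eps}$, for which the connectedness of the approximating surfaces will be essential. By definition of $\Ms_1$ there is a sequence $(\Sigma_k)$ in $\Ms$ with $\chi_{E_k}\to\chi_E$ in $L^1(\Rn)$, $\Ha^{n-1}\lfloor\Sigma_k\to\mu$ as Radon measures and $\W(\Sigma_k)\to\W(\mu)$. Since $\W(\mu)=\lim_k\W(\Sigma_k)$, it suffices to produce, for each fixed $\Sigma=\Sigma_k$ — which we may assume to be $C^\infty$ after a preliminary approximation, so that $\sdist(\cdot,\Sigma)$ is smooth — a family $(u_\eps)$ of smooth phase fields satisfying~\eqref{eq:pf-conf} with $u_\eps\to 2\chi_{U_\Sigma}-1$ in $L^1(\Rn)$, $\mu_\eps\to\Ha^{n-1}\lfloor\Sigma$ as Radon measures, and $\E_\eps(u_\eps)\to\W(\Sigma)$ as $\eps\to0$; a diagonal sequence $u_\eps:=u_\eps^{k(\eps)}$ with $k(\eps)\to\infty$ sufficiently slowly then gives the theorem.

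For fixed smooth $\Sigma\subset\subset\Omega$ I would take the optimal-profile ansatz $u_\eps(x)=q(\sdist(x,\Sigma)/\eps)$ on a fixed tubular neighbourhood $\{|\sdist(\cdot,\Sigma)|<\delta\}\subset\Omega$ on which $\sdist(\cdot,\Sigma)$ is smooth, with the sign of $\sdist$ chosen so that $u_\eps\to+1$ on $U_\Sigma$, where $q$ is the heteroclinic solution of $q''=W'(q)$, $q(\pm\infty)=\pm1$, i.e.\ $(q')^2=2W(q)$; outside the neighbourhood $u_\eps$ is interpolated smoothly and monotonically to the constants $\pm1$, which costs only exponentially small energy since $q\to\pm1$ exponentially, and which in particular ensures $u_\eps=-1$ on $\Rn\setminus\Omega$. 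The convergences $u_\eps\to 2\chi_{U_\Sigma}-1$, $\mu_\eps\to\Ha^{n-1}\lfloor\Sigma$, $\S_\eps(u_\eps)\to\Ha^{n-1}(\Sigma)=S$ and $\W_\eps(u_\eps)\to\W(\Sigma)$ are the standard recovery-sequence statements for the Modica--Mortola and the De Giorgi/Willmore functionals, see \cite{Modi87,BeMu05,RoeSc06}. A Fermi-coordinate computation on the tube moreover gives $|\S_\eps(u_\eps)-S|=O(\eps)$, hence $\eps^{-(1-\sigma)}(\S_\eps(u_\eps)-S)^2=O(\eps^{1+\sigma})\to0$; thus only the last term of $\E_\eps$ needs real work.

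For that term, the upper bound $\inf_\phi\A_{u_\eps,\eps}(\phi)\le\A_{u_\eps,\eps}(1)=\int_\Omega\tfrac1\eps\widetilde W(u_\eps)$ is immediate ($W(1)=0$, $\widetilde W\ge0$), so the last bracketed difference in~\eqref{eq:def-Eeps} is $\le0$ and I only need the matching lower bound $\inf_\phi\A_{u_\eps,\eps}(\phi)\ge\int_\Omega\tfrac1\eps\widetilde W(u_\eps)-O(\sqrt\eps)$ — amply enough, since then $\eps^{-(1/2-\sigma/2)}\cdot O(\eps)=O(\eps^{1/2+\sigma/2})\to0$. Let $\phi$ be a minimizer and write $T_\eps=\{\widetilde W(u_\eps)>0\}$; for our $u_\eps$ this is a tube of width $O(\eps)$ around $\Sigma$, so $\Ha^n(T_\eps)=O(\eps)$, $\int_\Omega\tfrac1\eps\widetilde W(u_\eps)=O(1)$, and $\widetilde W(\overline\lambda u_\eps)\equiv1$ on $T_\eps$. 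Testing with $\phi\equiv1$ gives $\A_{u_\eps,\eps}(\phi)=O(1)$, and since the $W(\phi)$- and $|\nabla\phi|^2$-terms carry the weight $\eps^{-3/2}$ on $T_\eps$, this forces both $\int_{T_\eps}W(\phi)=O(\eps^{3/2})$ and $\int_{T_\eps}|\nabla\phi|^2=O(\eps^{3/2})$. The first bound confines $\phi$ to a neighbourhood of $\{-1,+1\}$ outside a set of measure $O(\eps^{3/2})$; the second, together with the connectedness of $\Sigma$ — hence of the thin tube $T_\eps$, whose Poincaré constant stays bounded as $\eps\to0$ — forces $\phi$, up to a set of measure $O(\eps^{3/2})$, to be either $\approx+1$ on all of $T_\eps$ or $\approx-1$ on all of $T_\eps$: any interface separating a substantial $\{\phi\approx+1\}$ from a substantial $\{\phi\approx-1\}$ inside $T_\eps$ would, by a Poincaré estimate, carry Dirichlet energy of order $\eps\gg\eps^{3/2}$. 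If $\phi\approx+1$ on $T_\eps$, I write the first term of $\A_{u_\eps,\eps}$ as $\int_{T_\eps}\tfrac1\eps\widetilde W(u_\eps)-\int_{T_\eps}\tfrac1\eps\widetilde W(u_\eps)(1-\phi)$ and bound $\int_{T_\eps}\bigl(\tfrac1\eps\widetilde W(u_\eps)(1-\phi)-\tfrac{9}{\eps^{3/2}}W(\phi)\bigr)$ by $O(\sqrt\eps)$: pointwise where $\phi$ is near $1$ one balances $\tfrac1\eps(1-\phi)$ against $\tfrac{9}{\eps^{3/2}}W(\phi)\gtrsim\eps^{-3/2}(1-\phi)^2$ (the relevant maximum $t\mapsto\eps^{-1}t-c\,\eps^{-3/2}t^2$ being $O(\eps^{-1/2})$) and multiplies by $\Ha^n(T_\eps)=O(\eps)$, while on the exceptional set of measure $O(\eps^{3/2})$ the contribution is $\eps^{-1}\cdot O(\eps^{3/2})=O(\sqrt\eps)$, with the subset where $\phi$ is large controlled via $\int_{T_\eps}W(\phi)=O(\eps^{3/2})$ and Hölder. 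If instead $\phi\approx-1$ on $T_\eps$, then the first term is $\approx-\int_\Omega\tfrac1\eps\widetilde W(u_\eps)$, but it enters under an absolute value; writing $\phi=-1+\psi$ with $0\le\psi$ small and balancing $\tfrac1\eps\psi$ against $\tfrac{9}{\eps^{3/2}}W(-1+\psi)\sim\tfrac{9}{\eps^{3/2}}\psi^2$ exactly as before again yields a loss of only $O(\sqrt\eps)$. In both cases $\inf_\phi\A_{u_\eps,\eps}(\phi)\ge\int_\Omega\tfrac1\eps\widetilde W(u_\eps)-O(\sqrt\eps)$, so the last term of $\E_\eps$ tends to $0$, and summing the three contributions gives $\E_\eps(u_\eps)\to\W(\Sigma)$.

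The main obstacle I anticipate is precisely the argument of the previous paragraph: making rigorous the dichotomy that a near-minimizer $\phi$ is essentially $+1$ or essentially $-1$ on the thin tube $T_\eps$ — this is the single place where connectedness of $\Sigma$ is used in the upper bound, via the $\eps$-uniform control of the Poincaré constant of $T_\eps$ — together with the pointwise balancing and the careful accounting of the small exceptional sets (including the regions where $\phi$ may be large, where the $\eps^{-3/2}W(\phi)$ term must be exploited to control $\int|\phi|$). The remaining ingredients — the smooth cut-off near $\partial\Omega$, the Fermi-coordinate expansion of $\S_\eps(u_\eps)$, and the diagonal extraction compatible with the three simultaneous convergences — are routine.
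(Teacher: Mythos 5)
Your outline is correct and, at the decisive step, genuinely different from the paper's. The recovery sequence (optimal profile in Fermi coordinates, cut off outside a $\delta$-tube), the diagonal argument reducing to a smooth $\Sigma$, the standard treatment of the Willmore and area-penalty terms, and the trivial upper bound $\inf_\phi\A_{u_\eps,\eps}(\phi)\le\A_{u_\eps,\eps}(1)=\int\tfrac1\eps\widetilde W(u_\eps)$ all coincide with the paper. Where you diverge is the lower bound for the nested infimum. You keep $\phi$ diffuse, extract the a priori bounds $\int_{T_\eps}W(\phi)=O(\eps^{3/2})$ and $\int_{T_\eps}|\nabla\phi|^2=O(\eps^{3/2})$ from $\A_{u_\eps,\eps}(\phi)\le\A_{u_\eps,\eps}(1)=O(1)$, invoke an $L^2$-Poincar\'e inequality on the thin tube $T_\eps$ with a constant that stays bounded as $\eps\to0$ (this is where connectedness of $\partial E$ enters) to force $\phi\approx\pm1$ up to a small exceptional set, and then balance $\eps^{-1}(1\mp\phi)$ pointwise against $\eps^{-3/2}W(\phi)$. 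The paper instead first replaces $\phi$ by a sharp interface $\tilde\phi=2\chi_{\{\phi\ge s_0\}}-1$ via an equipartition/coarea selection (losing $O(\sqrt\eps)$, cf.\ \eqref{eq:grad_phi_est}--\eqref{eq:phi_est_final}), and then proves in Proposition~\ref{prop:dr} that $\phi\equiv1$ minimizes the resulting $BV$-functional $\tilde\A_{u_\eps,\eps}$ up to $O(\eps^2)$, using a relative \emph{isoperimetric} inequality in the thin tube (Lemma~\ref{lem:balotelli}) --- whose constant blows up like $(\delta/r)^{1/(n-1)}$ but is offset by the $n/(n-1)$-power on the perimeter --- together with the volume-ratio estimate of Lemma~\ref{lem:compl}. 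Both routes arrive at the same $O(\sqrt\eps)$ defect; yours is shorter but relies on a uniform-in-$\eps$ spectral gap on a degenerating domain, whereas the paper's avoids that by exploiting the specific $\eps^{-3/2}$ exponent against the isoperimetric scaling. If you develop this, two things should be made precise: (i) the uniform $L^2$-Poincar\'e constant on $T_\eps$ must be proved or cited (on $\Sigma\times(-h,h)$ the first nonzero Neumann eigenvalue is $\min\{\lambda_1(\Sigma),(\pi/2h)^2\}=\lambda_1(\Sigma)$ for small $h$, and the curved tube is an $O(h)$ metric perturbation, so this is true but not immediate); (ii) your phrase ``Dirichlet energy of order $\eps$'' slightly undersells the dichotomy you need --- the Poincar\'e bound, applied with the correct mean $\bar\phi$, must give that the \emph{smaller} of $\{\phi\approx+1\}\cap T_\eps$ and $\{\phi\approx-1\}\cap T_\eps$ has measure $O(\eps^{3/2})$, not merely $o(\eps)$, so that the exceptional set contributes $O(\eps^{3/2})\cdot O(\eps^{-1})=O(\sqrt\eps)$ to the first term of $\A_{u_\eps,\eps}$.
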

\begin{theorem}[Lower bound]
\label{thm:lower}
Consider a sequence $(u_\eps)_{\eps>0}$ of phase fields $u_\eps:\Rn\to\R$ with \eqref{eq:pf-conf} and
\begin{align}
	&\liminf_{\eps\to 0} \E_\eps(u_\eps)\,<\, \infty, \label{eq:liminf3}\\
	&u_\eps \,\to\, u \qquad\text{ in }L^1(\Rn), \label{eq:liminf1}\\
	&\mu_\eps \,\to\, \mu \qquad\text{ as Radon measures} \label{eq:liminf2}
\end{align}
for a function $u\in L^1(\Rn)$ and a Radon measure $\mu$ on $\Rn$. Then the following properties hold.\\
\begin{align}
	\text{There exists a set } E\subset \overline{\Omega}\text{ of finite perimeter with }u=2\Chi_E-1, \label{eq:liminf4}\\
	\mu(\Rn) \,=\, S, \label{eq:liminf5}\\
	\mu \,\geq\, |\nabla u|, \label{eq:liminf6}\\
	\mu \text{ is an $(n-1)$-dimensional integral varifold} \nonumber \\
	\text{with weak mean curvature }\vec{H}\in L^2(\mu)\text{ and multiplicity $\theta(\mu,\cdot)$}\label{eq:liminf7}.
\end{align}
Finally, $\mu$ represents a connected structure in the sense that there are no two open sets $\Omega_1,\Omega_2\subset \Rn$ with disjoint closure such that
\begin{align}
	\mu(\Omega_i) \,>\, 0 \,(i=1,2)\quad\text{ and } \mu\big(\Rn\setminus(\Omega_1\cup\Omega_2)\big)\,=\,0 \label{eq:liminf8}
\end{align}
and we have the lower estimate
\begin{align}
	\W(\mu) \,\leq\, \liminf_{\eps\to 0} \E_\eps(u_\eps). \label{eq:liminf9}
\end{align}
\end{theorem}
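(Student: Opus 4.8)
The plan is to extract the three pieces of information encoded in the total energy $\E_\eps$ separately, using that $\liminf_\eps \E_\eps(u_\eps)<\infty$ forces each nonnegative summand to stay bounded. First I would pass to a subsequence realizing the liminf and along which $\E_\eps(u_\eps)\le C$. From the bound on $\W_\eps(u_\eps)$ together with the area term, one gets that $\S_\eps(u_\eps)$ is bounded and $(\S_\eps(u_\eps)-S)^2 = o(\eps^{1-\sigma})$, so $\S_\eps(u_\eps)\to S$. Standard Modica--Mortola theory then gives \eqref{eq:liminf4} and, via the diffuse mass measures $\mu_\eps\to\mu$, the identity $\mu(\Rn)=S$ in \eqref{eq:liminf5} and the inequality $\mu\ge|\nabla u|$ in \eqref{eq:liminf6}. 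The bound on $\W_\eps(u_\eps)$ is precisely what the results of \cite{BeMu05,RoeSc06} require to conclude that $\mu$ is an integral $(n-1)$-varifold with weak mean curvature $\vec H\in L^2(\mu)$ and that the lower estimate $\W(\mu)\le\liminf_\eps \W_\eps(u_\eps)\le\liminf_\eps\E_\eps(u_\eps)$ holds; this yields \eqref{eq:liminf7} and \eqref{eq:liminf9}. So everything except the connectedness statement \eqref{eq:liminf8} is essentially a citation exercise combined with the coercivity that the energy bound provides.

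The heart of the matter is the connectedness property. I would argue by contradiction: suppose $\Omega_1,\Omega_2$ are open with disjoint closures, $\mu(\Omega_i)>0$, and $\mu$ is concentrated on $\Omega_1\cup\Omega_2$. The idea, following the heuristic in the paper, is to build a competitor $\phi$ for $\A_{u_\eps,\eps}$ that is $\approx +1$ near $\Omega_1$ and $\approx -1$ near $\Omega_2$, with a transition made across the region where $\mu$ (hence, for small $\eps$, $\mu_\eps$) has no mass. On such a competitor the first term $\bigl|\int_\Omega \tfrac1\eps\widetilde W(u_\eps)\phi\bigr|$ is strictly smaller than $\int_\Omega\tfrac1\eps\widetilde W(u_\eps)$ by a fixed amount, because the contributions from the two components partially cancel: $\tfrac1\eps\widetilde W(u_\eps)$ has positive mass bounded below near each $\Omega_i$ (this is where one needs that $\widetilde W(u_\eps)$ is comparable to $W(u_\eps)$ near the interface and that $\mu_\eps(\Omega_i)$ does not go to zero). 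Meanwhile the second and third terms of $\A_{u_\eps,\eps}$ — the ones weighted by $\eps^{-3/2}\widetilde W(\overline\lambda u_\eps)$ — can be kept small: the transition layer of $\phi$ lives where $\widetilde W(\overline\lambda u_\eps)$ vanishes (since $u_\eps$ is close to $\pm1$ away from the interface, $|\overline\lambda u_\eps|$ exceeds $\overline\lambda^{-1}(1-\lambda)$ there only if… — one must be careful here, and this is the delicate scaling bookkeeping), and the $\eps\,W(\phi)+\eps|\nabla\phi|^2$ remainder is $O(\eps)$ times fixed geometric quantities. Comparing, $\inf_\phi\A_{u_\eps,\eps}(\phi) - \int_\Omega\tfrac1\eps\widetilde W(u_\eps)$ is bounded below by a negative constant independent of $\eps$, so its square is bounded below, and dividing by $\eps^{1/2-\sigma/2}\to 0$ makes the last term of $\E_\eps$ blow up, contradicting \eqref{eq:liminf3}.

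The main obstacle, and the step demanding real care, is the construction and estimation of this competitor $\phi$: one must choose the transition region so that it simultaneously (i) separates $\spt\mu$ into the two pieces, (ii) lies where $\widetilde W(\overline\lambda u_\eps)=0$ so the expensive $\eps^{-3/2}$ terms do not see the gradient of $\phi$, and (iii) produces a quantified, $\eps$-independent gain in the first term. Making (ii) compatible with (i) requires knowing that away from (a neighborhood of) $\spt\mu$ the phase field $u_\eps$ is close to $\pm1$ in a sufficiently strong (e.g. $L^\infty$ on fixed compact sets) sense; this is where the $W^{2,2}$-bound coming from $\W_\eps$, the Modica--Mortola structure, and elliptic/Sobolev estimates must be combined — and it is presumably where the explicit choice $\lambda=\tfrac1{61}$ advertised in the footnote gets pinned down. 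I would isolate this as a separate lemma (roughly: "if $\mu_\eps\to\mu$ and $\W_\eps(u_\eps)+\S_\eps(u_\eps)\le C$, then on any compact $K\subset\Rn\setminus\spt\mu$, $\mathrm{dist}(u_\eps,\{\pm1\})\to 0$ uniformly"), prove it once, and then feed it into the competitor estimate. The remaining verification — that on the competitor the first term genuinely drops by a fixed amount — reduces to a lower bound $\liminf_\eps \mu_\eps(\Omega_i')>0$ on slightly shrunken subdomains, which follows from $\mu_\eps\to\mu$ and $\mu(\Omega_i)>0$ together with the comparability of $\tfrac1\eps\widetilde W(u_\eps)\,\dx$ and $\mu_\eps$ on the interfacial region.
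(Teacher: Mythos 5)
Your outline for \eqref{eq:liminf4}--\eqref{eq:liminf7} and \eqref{eq:liminf9} (citation of \cite{RoeSc06} plus the area-penalty coercivity) matches the paper, and the global structure of your connectedness argument --- contradiction, competitor $\phi$ with $\pm 1$ plateaus and a transition layer placed away from $\spt\mu$, quantified drop in the first term of $\A_{u_\eps,\eps}$, and blow-up of the $\eps^{-(1/2-\sigma/2)}$ penalty --- is the right one. But the lemma you isolate as the missing ingredient, namely that $\dist(u_\eps,\{\pm 1\})\to 0$ \emph{uniformly} on compact sets disjoint from $\spt\mu$, is false under the available hypotheses, and the paper explicitly flags this (``In general we cannot expect that $u_\eps$ converges to $\pm 1$ uniformly on compact sets separated from the support of $\mu$''). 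The obstruction is real: a shrinking bubble of radius $r_\eps\to 0$ on which $u_\eps$ crosses $1-\lambda$ contributes $\sim r_\eps^{n-1}$ to the Modica--Mortola energy and $\sim r_\eps^{n-3}$ to $\W_\eps$, which for $n=3$ is merely bounded, not small. Such a bubble leaves no trace in $\mu$ but defeats condition (ii) of your list, because on it $\widetilde W(\overline\lambda u_\eps)$ is of order one and the $\eps^{-3/2}$ coefficients would then punish any transition of $\phi$ placed nearby.

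The paper's fix is its Lemma~\ref{rect.lower}, which you would need to replace your proposed uniform-convergence lemma with. It does not claim uniform convergence; it shows that at any point $x_0\notin\spt\mu$ where the rescaled quantity $\int_{B(x_0,r_0)}\eps^{-3/2}\widetilde W(\overline\lambda u_\eps)$ has positive $\limsup$, the limiting Willmore-density measure $\alpha$ carries a fixed atom $\alpha(\{x_0\})\ge\theta_0$. Since $\alpha$ is finite (by the bound on $\W_\eps$), only finitely many such points can exist, and one then routes the transition layer of $\phi$ through a compact region avoiding both $\overline{\Omega_1}\cup\overline{\Omega_2}$ and this finite bad set. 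This is a genuinely different and strictly weaker statement than uniform convergence, and its proof is a monotonicity/covering argument borrowed from \cite{RoeSc06}, not a Sobolev/elliptic estimate as you suggest. A second, smaller gap: the ``comparability of $\tfrac1\eps\widetilde W(u_\eps)\,\dx$ and $\mu_\eps$'' you appeal to for the quantified first-term drop also requires the vanishing of the discrepancy measure and a separate bound on $\int_{\{|u_\eps|\ge 1-\lambda\}}\tfrac1\eps W(u_\eps)$ (the paper's Proposition~\ref{prop:small_deviations}), which is where the numerical constraint $\lambda<1/60$ actually enters; without it the gain in the first term is not bounded below by $c\,\mu(\Omega_1)$.
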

We remark that also the corresponding compactness property holds: For an arbitrary sequence $(u_\eps)_{\eps>0}$ of smooth phase fields with $\liminf_{\eps\to 0} \E_\eps(u_\eps)<\infty$ there exist a subsequence $\eps\to 0$,  $u\in L^1(\Rn)$, and a Radon measure $\mu$ on $\Rn$ with \eqref{eq:liminf1}, \eqref{eq:liminf2}.
\section{Construction of a recovery Sequence.}
In this section, we show that, given $u = 2\chi_E-1$ for $(E,\mu) \in \Ms_1$ as in the statement of Theorem~\ref{thm:main}, there exists a sequence $u_\eps \to u$ in $L^1$, $|\nabla u| \to \mu$ as a Radon measure such that $\limsup_{\eps \to 0} \E_\eps(u_\eps) \le \W(\mu)$. 
Note that we can, in view of the definition of $\Ms_1$, assume $\partial E$ to be embedded and $C^2$, and $\mu = |\nabla \chi_E|$. A diagonal sequence approach will yield the desired result otherwise.
We approximate $u$ by a common optimal profile construction $u_\eps$, given as follows. 

Since $\partial E$ is now an embedded $C^2$ surface, there exists $\delta>0$ such that the signed distance function
$d := \sdist(\cdot, \partial E)$ (taken positive on the inside of $E$) is of class $C^2$ on $\{ x\in\Omega: \dist(x,\partial E) \le \delta \}$.
As in \cite{DoMR11} we follow the standard construction and consider the optimal one-dimensional profile $q:\R\to (-1,1)$,
\begin{gather}
  -q^{\prime\prime} + W^\prime(q)\,=\, 0, \label{eq:opro1}\\
  q(-\infty)\,=\, -1, \quad q(+\infty)\,=\,1, \quad q(0)\,=\,0
  \label{eq:opro2}
\end{gather}
and note that 
\begin{gather}
  q'(r)\,=\,\sqrt{2W(q(r))} \label{eq:equipart-q}
\end{gather}
holds for all $r>0$.
Next fix  a smooth symmetric cut-off function $\eta\in C^\infty(\R)$,
\begin{gather*}
  0\leq \eta\leq 1,\quad \eta(r) = 1\text{ for }r\in[-1,1],\quad 
  \eta(r) = 0 \text{ for }|r|\geq 2,\quad \eta'\,\leq\,0.
\end{gather*}
For $\delta>0$ as above we then define
\begin{gather*}
  q_\eps(r)\,:=\,
  \eta\big(\frac{2r}{\delta}\big)q\big(\frac{r}{\eps}\big) +
  \sgn(r)\big(1-\eta\big(\frac{2r}{\delta}\big) \big) 
\end{gather*}
and claim that
\begin{gather}
  u_\eps(x)\,:=\,  q_\eps(d(x)). \label{eq:def-recovery}
\end{gather}
defines a suitable recovery sequence $(u_\eps)_{\eps>0}$.

The choice of $u_\eps(x)$ immediately ensures convergence of the Willmore energy $\W_\eps$, see for example \cite{BePa93,BeMu05}. It remains to show that the additional terms in $\E_\eps(u_\eps)$ vanish in the limit $\eps\to 0$.

With this aim we consider the following parametrization of the $\delta$-strip around $\partial E$
\begin{align}
 \psi: \partial E \times (-\delta,\delta)\,\to\, \Omega,\quad \psi(z,t)\,=\, z+t\nu(z),
\end{align}
where $\nu$ denotes the inner unit normal of $E$. By the choice of $\delta$ we find that $\psi$ is injective and continuously differentiable. Its Jacobi-determinant can be estimated by
\begin{align}
\label{eq:jac_est}
	\abs{\det J\psi(z,t) - 1} \le C t
\end{align}
with a constant $C$ depending only on the second fundamental form of $\partial E$. We now calculate, assuming
$r:=\eps^{1-\sigma/2} <\delta/2$,
\begin{align}
  c_0 S_\eps(u_\eps)\,&=\, \int_{\partial E} \int_{-\delta}^{\delta}
  \Big(\frac{\eps}{2} q_\eps'(t)^2 + \frac{1}{\eps}W(q_\eps(t)) \Big)
  \det J\psi(z,t) \,dt\,\dH^{n-1}(z) \notag\\
  &=\, \int_{\partial E}  \int_{-r}^{r}  \frac{1}{\eps}\Big(\frac{1}{2}
  q'(t/\eps)^2 + W(q(t/\eps)) \Big)\det J\psi(z,t)\,dt\,\dH^{n-1}(z)\notag\\
  &\quad + \int_{\partial E}  \int_{\{r<|t|<\delta\}}  \Big(\frac{\eps}{2}
  q_\eps'(t)^2 + 
  \frac{1}{\eps}W(q_\eps(t)) \Big) \det J\psi(z,t)\,dt \,\dH^{n-1}(z). \label{eq:est-L0}
\end{align}
The second term vanishes faster than any power of $\eps$ (see \cite{DoMR11}), and so does
\begin{align}
\abs{\int_{\partial E}  \int_{-r}^{r}  \frac{1}{\eps}\Big(\frac{1}{2}
  q'(t/\eps)^2 + W(q(t/\eps)) \Big)\,dt\,\dH^{n-1}(z)  - c_0 S}.
\end{align}
Using~\eqref{eq:jac_est} for the first term we deduce
\begin{align}
  &\Big|\int_{\partial E}  \int_{-r}^{r}  \frac{1}{\eps}\Big(\frac{1}{2}
  q'(t/\eps)^2 + W(q(t/\eps)) \Big)\det J\psi(z,t)\,\mathrm{d}t\,\dH^{n-1}(z)\\
  &\qquad \qquad \quad -\int_{\partial E}  \int_{-r}^{r}  \frac{1}{\eps}\Big(\frac{1}{2}
  q'(t/\eps)^2 + W(q(t/\eps)) \Big) \,\mathrm{d}t \,\dH^{n-1}(z)\Big|\\
\leq\, &\int_{\partial E}  \int_{-r}^{r}  \frac{1}{\eps}\Big(\frac{1}{2}
  q'(t/\eps)^2 + W(q(t/\eps)) \Big)C|t|\,\mathrm{d}t\,\dH^{n-1}(z)\\
  \leq\,& rC\int_{\partial E}  \,\dH^{n-1}(z) \,\leq\, C\eps^{1-\sigma/2},
\end{align}
which implies
\begin{align}
	\abs{ S_\eps(u_\eps) - S } \le C\,\eps^{1-\sigma/2}.  \label{eq:limsup-area}
\end{align}
This shows that the surface area-penalty term, the second term in the energy $\E_\eps$, vanishes for $\eps \to 0$.

We now turn our attention to the third term. The choice $\phi \equiv +1$ as a test function shows immediately that 
\begin{align}
 \inf_{\phi\in W^{1,2}(\Omega)} \A_{u_\eps,\eps}(\phi) \le \int_\Omega  \frac{1}{\eps} \widetilde{W}(u_\eps). \label{eq:A_upper_bd}
 \end{align}

We now switch to a modified functional in $\phi$, where we only allow the argument to take the values $\pm 1$, making it easier to prove a lower bound on the infimum. We will afterwards estimate the original functional by the modified one from below. Furthermore, we restrict our attention to the neighborhood of the transition layer where $\widetilde{W}(u_\eps) > 0$. Note that
\begin{gather*}
	\{x\in\Omega:~\widetilde{W}(u_\eps(x))>0\}=\{x\in\R^n:~\dist(x,\partial E)<r(\eps)\},
	\\
\text{ where }r(\eps):=\eps q^{-1}(1-\lambda), \text{ that is } r(\eps)=\eps r(1).
\end{gather*}
and define
\begin{align}
\label{eq:Urep}
\mathcal{U}_r := \{x\in\R^n:~\dist(x,\partial E)<r\},
\end{align}
in particular,  $\{\widetilde{W}(u_\eps) > 0\}= \mathcal{U}_{r(\eps)}$. Let now $\tilde{\A}_{u_\eps,\eps} \colon BV(\Urep;\,\{-1,+1\}) \to \R$ be defined by
\begin{align}
\tilde{\A}_{u,\eps}(\phi) = \abs{\int_\Urep \frac{1}{\eps} \widetilde{W}(u)\,\phi\, \dx} + 
\int_\Urep \frac{1}{\eps^{\frac{3}{2}}} \widetilde{W}(\overline{\lambda} u) \,d\!\abs{\nabla \phi}.
\end{align}
Note that for $\phi \equiv 1$ or  $\phi \equiv -1$ the modified functional agrees with the original one.

We thus first need to prove the following estimate on the modified functional, stating that for sufficiently small $\eps$, the minimizer is trivial. 
\begin{prop}\label{prop:dr}
There exists $\eps_0:=\eps_0(n,S,\partial E)$ such that for every $\eps<\eps_0$ we have
\begin{align}
	\inf_{\phi\in BV(\Urep;\,\{-1,+1\})} \tilde{A}_{u_\eps,\eps}(\phi) \,=\, \tilde{A}_{u_\eps,\eps}(1) \,=\, \tilde{c}_0 S+O(\eps^2), \label{eq:prop3}
\end{align}
for a $\tilde{c}_0>0$ independent of $\eps$.
\end{prop}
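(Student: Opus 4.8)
The plan is to reduce the minimization over $BV(\Urep;\{-1,+1\})$ to a problem about the geometry of level sets and then exploit the scaling relationship between the two terms in $\tilde{\A}_{u_\eps,\eps}$. First I would observe that the competitor $\phi\equiv 1$ gives $\tilde{\A}_{u_\eps,\eps}(1)=\frac{1}{\eps}\int_\Urep \widetilde W(u_\eps)\,\dx$, and a one-dimensional computation using the optimal profile $q$ and the parametrization $\psi(z,t)=z+t\nu(z)$ of the $r(\eps)$-strip — together with the Jacobian estimate~\eqref{eq:jac_est} and $\Ha^{n-1}(\partial E)$ being controlled by $S$ (via \eqref{eq:limsup-area}) — shows this equals $\tilde c_0 S + O(\eps^2)$, where $\tilde c_0 := \int_{-r(1)}^{r(1)} \widetilde W(q(\tau))\,d\tau / \big(\text{normalization}\big)$; the $O(\eps^2)$ comes from the linear-in-$t$ Jacobian error integrated against an even profile, so the first-order term cancels. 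This establishes the second equality in~\eqref{eq:prop3} and the upper bound $\inf \le \tilde c_0 S + O(\eps^2)$.

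For the matching lower bound, the key point is that any $\phi\in BV(\Urep;\{-1,+1\})$ which is not identically $\pm 1$ must have a jump set $\partial^*\{\phi=1\}$ of positive $\Ha^{n-1}$-measure inside $\Urep$. The first term of $\tilde{\A}_{u_\eps,\eps}$ can gain at most $\frac{1}{\eps}\int_\Urep \widetilde W(u_\eps)$ relative to $\phi\equiv 1$ (and only if $\phi$ flips sign on essentially all of $\Urep$), while the second term costs $\eps^{-3/2}\int \widetilde W(\overline\lambda u_\eps)\,d|\nabla\phi|$. I would use a slicing/coarea argument along the normal coordinate $t$: since $\Urep$ is a tubular neighborhood of thickness $2r(\eps) = 2\eps\,r(1)$, separating the region $\{t>0\}$-side behavior from the $\{t<0\}$-side requires the jump set to "cross" the tube, and a relative isoperimetric inequality in the thin tube $\Urep$ forces $|\nabla\phi|(\Urep) \gtrsim \eps^{-1}\min(|\{\phi=1\}|,|\{\phi=-1\}|)$ — here the factor $\eps^{-1}$ is exactly the inverse thickness. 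Since $\widetilde W(\overline\lambda u_\eps)$ is bounded below by a positive constant on the slightly smaller tube where $|u_\eps|\le \overline\lambda^{-1}(1-\lambda)$... no: rather, $\widetilde W(\overline\lambda u_\eps)=1$ on $\{|u_\eps|\le (1-\lambda)/\overline\lambda\}$ which is a tube of comparable thickness $c\,\eps$, the second term is at least $\eps^{-3/2}\cdot\eps^{-1}\cdot(\text{gained mass})$, which beats the $\eps^{-1}$-order gain in the first term by a factor $\eps^{-3/2}$. Balancing these, any nontrivial $\phi$ yields $\tilde{\A}_{u_\eps,\eps}(\phi) \ge \tilde{\A}_{u_\eps,\eps}(1)$ for $\eps$ small, which is the content of~\eqref{eq:prop3}.

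I expect the main obstacle to be making the relative isoperimetric step in the thin tube $\Urep$ fully rigorous: one must rule out "degenerate" configurations where $\phi$ flips on a set of small measure but with small perimeter, e.g. a thin slab perpendicular to $\partial E$ threading the tube, and one must handle the behavior near $\partial\Urep$ where the boundary datum is free (no trace constraint is imposed there). The clean way is to work in the $(z,t)$ coordinates, write $\phi(z,t)$, and for $\Ha^{n-1}$-a.e.\ $z\in\partial E$ apply the one-dimensional fact that a $\{-1,+1\}$-valued $BV$ function on $(-r(\eps),r(\eps))$ which takes both values must jump, contributing to $|\partial_t\phi|$; integrating in $z$ against the weight $\eps^{-3/2}\widetilde W(\overline\lambda u_\eps)$ and comparing with the $z$-integral of the first-term integrand (which is $O(\eps^{-1})$ pointwise in $z$) gives the bound with room to spare. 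The constants $8$ and $9$ (versus the $1$ in the first term) in the full functional $\A_{u,\eps}$ are tuned precisely so this comparison, including lower-order error terms from the Jacobian and from the transition region, goes through; for the modified functional $\tilde{\A}$ the weaker coefficient $1$ already suffices because no $W(\phi)$ or $\eps$-regularization term is present.
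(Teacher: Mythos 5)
The upper bound half of your proposal — computing $\tilde{\A}_{u_\eps,\eps}(1)$ via the tubular parametrization and the cancellation of the first-order Jacobian term against the even profile — matches the paper. The contradiction/competition framework for the lower bound is also the right shape, but two of your key steps don't hold up.

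First, the ``relative isoperimetric inequality'' you invoke, $|\nabla\phi|(\Urep) \gtrsim \eps^{-1}\min\big(|\{\phi=1\}\cap\Urep|,\,|\{\phi=-1\}\cap\Urep|\big)$, is false. Take $n=2$, $\partial E$ a circle, $\Urep$ an annulus of thickness $\sim\eps$, and let $\phi$ flip sign across two radial cuts so that each of $\{\phi=\pm1\}$ is half the annulus. Then both sides have measure $\sim\eps$, while $|\nabla\phi|(\Urep)\sim\eps$, so your inequality would demand $\eps\gtrsim\eps^{-1}\cdot\eps=1$. Your proposed 1D-slicing fallback doesn't rescue this: in exactly this counterexample $\phi(z,\cdot)$ is constant in the normal coordinate $t$ for a.e.\ $z$, so the normal slices see no jump whatsoever — the interface is entirely tangential. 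The correct statement (the paper's Lemma~\ref{lem:balotelli}) is a genuine Poincar\'e inequality in the thin tube, obtained by contracting the $\delta$-tube onto $\Urep$ in the normal direction and transferring a classical Poincar\'e--Wirtinger bound; it reads $\min(\cdots)\leq (\delta/r)^{1/(n-1)}\,C\,\big(|\nabla\chi_A|(\Urep)\big)^{n/(n-1)}$, with the $n/(n-1)$ exponent and the much weaker prefactor $\eps^{-1/(n-1)}$.

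Second, once you replace your false linear estimate with the correct superlinear one, the direct comparison between the two terms of $\tilde{\A}$ no longer ``beats the gain by a factor $\eps^{-3/2}$'' automatically: you get gain $\lesssim \eps^{-1-1/(n-1)}\,|\nabla\varphi|^{n/(n-1)}$ against cost $\sim\eps^{-3/2}|\nabla\varphi|$, which only closes if you have an a priori smallness bound on $|\nabla\varphi|(\Urep)$. This is the step your proposal misses entirely, and it is where the contradiction hypothesis is actually used: assuming $\tilde{\A}_{u_\eps,\eps}(\varphi)<\tilde{\A}_{u_\eps,\eps}(1)=\tilde{c}_0 S+O(\eps^2)$ forces $\eps^{-3/2}\int_\Urep\widetilde{W}(\overline\lambda u_\eps)\,d|\nabla\varphi|=O(1)$, hence $|\nabla\varphi|(\Urep)=O(\eps^{3/2})$, and only then does the superlinear Poincar\'e bound lose a positive power of $\eps$ compared to the cost term. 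You also need the paper's Lemma~\ref{lem:compl} (that the sign condition $\int\widetilde{W}(u_\eps)\varphi\geq0$ bounds $|\{\varphi<0\}\cap\Urep|$ by a constant times $|\{\varphi>0\}\cap\Urep|$) to convert the gain into the ``min'' appearing in the Poincar\'e inequality, a reduction your proposal glosses over.
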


In order to prove Proposition \ref{prop:dr} we need the following Poincar{\'e}-type estimate, which will be used to link the gain in the first term of the functional achieved by having $\phi$ switch signs to a loss in the second term.
\begin{lemma}\label{lem:balotelli} Let $E\subset\R^n$ be open and such that $\partial E$ is a connected, compact $C^2$ hypersurface. For fixed $0<\delta<1/2\overline{\kappa}$, where 
$
\overline{\kappa}:=\max\{\sum_{i=1}^{n-1}\vert\kappa_i(z)\vert:~z\in\partial E\},
$
with principle curvatures $\kappa_i$, consider, for $r\in (0,\delta)$,  
$$
\mathcal U_r:=\{x\in\R^n:~\dist(x,\partial E)<r\}
$$
as above. There exists $C:=C(n,\partial E)$ such that for every $r\in (0,\delta)$ and every $A\subset\R^n$ with $\chi_A\in BV_{loc}(\R^n)$, we have
$$
 \min\big\{\|\chi_A\|_{L^1(\mathcal U_r)},\|1-\chi_A\|_{L^1(\mathcal U_r)}\big\}\leq \left(\frac{\delta}{r}\right)^{\frac{1}{n-1}} C \left(\int_{\mathcal U_r}d\vert\nabla\chi_A\vert \right)^{n/n-1}. 
$$
\end{lemma}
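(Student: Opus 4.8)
The plan is to exploit the product structure of the tubular neighborhood $\mathcal U_r$ via the parametrization $\psi(z,t) = z + t\nu(z)$ on $\partial E \times (-r,r)$, reducing the estimate to a one–dimensional Poincaré-type inequality on the fibers, then integrating over $\partial E$ and using the isoperimetric/Hölder bookkeeping to absorb the factor $(\delta/r)^{1/(n-1)}$. First I would note that, since $r < \delta < 1/(2\overline\kappa)$, the map $\psi$ is a bi-Lipschitz diffeomorphism of $\partial E \times (-r,r)$ onto $\mathcal U_r$ with Jacobian bounded above and below by constants depending only on $n$ and $\partial E$ (this is the content of \eqref{eq:jac_est}, uniformly in $r<\delta$). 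Thus up to such constants I may compute all the $L^1$ norms and the total variation of $\chi_A$ in the $(z,t)$ coordinates, writing $v(z,t) := \chi_A(\psi(z,t))$, which lies in $BV(\partial E \times (-r,r); \{0,1\})$.

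Next I would run the one–dimensional argument on each fiber $\{z\}\times(-r,r)$: for a $BV$ function $v(z,\cdot)$ taking values in $\{0,1\}$, either $v(z,\cdot)$ is a.e. constant on the fiber, or it has at least one jump, so $\int_{-r}^r |\partial_t v(z,t)|\,dt \ge 1$. The key balancing step is that if $v(z,\cdot)$ is constant equal to $c(z)\in\{0,1\}$ then $\min\{\int |v(z,\cdot)|, \int |1-v(z,\cdot)|\} = 0$ on that fiber, while if it jumps, the fiber contributes at most $2r$ to $\min\{\|\chi_A\|_{L^1},\|1-\chi_A\|_{L^1}\}$ but contributes at least $1$ to the tangential-plus-normal total variation. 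Hence, setting $\Sigma_{\mathrm{bad}} := \{z\in\partial E: v(z,\cdot)\text{ non-constant}\}$, I get
\begin{align*}
 \min\big\{\|\chi_A\|_{L^1(\mathcal U_r)},\|1-\chi_A\|_{L^1(\mathcal U_r)}\big\}
 \,&\le\, C\,\min_{c\in\{0,1\}}\int_{\mathcal U_r}|\chi_A - c|\,\dx
 \,\le\, C\,r\,\Ha^{n-1}(\Sigma_{\mathrm{bad}}),\\
 \Ha^{n-1}(\Sigma_{\mathrm{bad}}) \,&\le\, \int_{\Sigma_{\mathrm{bad}}}\int_{-r}^r|\partial_t v(z,t)|\,dt\,\dH^{n-1}(z)
 \,\le\, C\int_{\mathcal U_r}d|\nabla\chi_A|,
\end{align*}
where in the first line I used that the constant $c$ minimizing $\int_{\mathcal U_r}|\chi_A-c|$ can be taken so that $v(z,\cdot)\equiv c$ on all of $\partial E\setminus\Sigma_{\mathrm{bad}}$, up to a factor $2$. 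Combining these already yields the bound with a factor $r$ in place of $(\delta/r)^{1/(n-1)}$ — which is better than claimed when $r<\delta$ — but to get the stated homogeneity (linear in $r$ on the right through the power $n/(n-1)$, and matching the scaling used later in Proposition~\ref{prop:dr}) I would instead interpolate: use the crude bound $\int_{\mathcal U_r}d|\nabla\chi_A|\ge \Ha^{n-1}(\Sigma_{\mathrm{bad}})$ together with the trivial bound $\Ha^{n-1}(\Sigma_{\mathrm{bad}}) \le \Ha^{n-1}(\partial E) = C(\partial E)$, and also $\min\{\|\chi_A\|_{L^1(\mathcal U_r)},\|1-\chi_A\|_{L^1(\mathcal U_r)}\}\le C\,r\,\Ha^{n-1}(\Sigma_{\mathrm{bad}})\le C r \big(\Ha^{n-1}(\Sigma_{\mathrm{bad}})\big)^{1/(n-1)}\big(\Ha^{n-1}(\partial E)\big)^{(n-2)/(n-1)}$, and finally $r \le \delta (\delta/r)^{1/(n-1)} \cdot (r/\delta)^{n/(n-1)}$... the clean route is simply to write $r = \delta^{1/(n-1)} r^{(n-2)/(n-1)} \cdot (r/\delta)^{1/(n-1)}$ and bound $r^{(n-2)/(n-1)}\Ha^{n-1}(\Sigma_{\mathrm{bad}})$ against $\big(\int d|\nabla\chi_A|\big)^{n/(n-1)}$ via Hölder with the area bound on $\partial E$.

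I expect the main obstacle to be precisely this last bookkeeping step: getting the exponent $n/(n-1)$ on the total variation and the exact power $(\delta/r)^{1/(n-1)}$ on the prefactor, rather than some cruder linear-in-$r$ estimate. The cleanest fix is to treat $n=2$ and $n=3$ separately (the only cases needed): for $n=2$ the surface $\partial E$ is a curve of finite length and $\Ha^{1}(\Sigma_{\mathrm{bad}})$ is directly comparable to the jump count, so the exponent $n/(n-1)=2$ comes for free from squaring; for $n=3$ one uses $\Ha^2(\Sigma_{\mathrm{bad}}) \le \big(\Ha^2(\Sigma_{\mathrm{bad}})\big)^{1/2}\big(\Ha^2(\partial E)\big)^{1/2}$ and the uniform area bound on $\partial E$ to convert the linear total-variation bound into the $3/2$ power. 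In either case the diffeomorphism estimate \eqref{eq:jac_est} guarantees the constants depend only on $n$ and $\partial E$, and the restriction $\delta < 1/(2\overline\kappa)$ is exactly what keeps $\psi$ a controlled diffeomorphism on the whole strip. A minor subtlety to check is that $\chi_A \in BV_{loc}(\R^n)$ restricts to a $BV$ function on the open set $\mathcal U_r$ with $|\nabla\chi_A|(\partial\mathcal U_r)$ playing no role, since all quantities in the statement are evaluated on the open tube; this is routine.
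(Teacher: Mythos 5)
Your proposal has a genuine gap: the key intermediate bound $\min\{\|\chi_A\|_{L^1(\mathcal U_r)},\|1-\chi_A\|_{L^1(\mathcal U_r)}\}\le C\,r\,\mathcal{H}^{n-1}(\Sigma_{\mathrm{bad}})$ is false. The fiber-by-fiber decomposition only sees the normal component of $\nabla\chi_A$. Take $A$ with $A\cap\mathcal U_r=\psi(\Sigma_1\times(-r,r))$ for some $\Sigma_1\subsetneq\partial E$ with $0<\mathcal{H}^{n-1}(\Sigma_1)<\mathcal{H}^{n-1}(\partial E)$: then $v(z,\cdot)$ is constant on every fiber, so $\Sigma_{\mathrm{bad}}=\emptyset$ and your right-hand side vanishes, while the left side is comparable to $r\min\{\mathcal{H}^{n-1}(\Sigma_1),\mathcal{H}^{n-1}(\partial E\setminus\Sigma_1)\}>0$. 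The assertion that "the minimizing constant $c$ can be taken so that $v(z,\cdot)\equiv c$ on all of $\partial E\setminus\Sigma_{\mathrm{bad}}$, up to a factor $2$" is exactly where this breaks: the constant fibers split into two families (identically $0$ and identically $1$) and no single $c$ absorbs both; the deficit is the tangential variation, which your slicing never touches. The Hölder/interpolation patch at the end also fails on its own: it requires $r^{(n-2)/(n-1)}\mathcal{H}^{n-1}(\Sigma_{\mathrm{bad}})\le C\big(\int_{\mathcal U_r}d|\nabla\chi_A|\big)^{n/(n-1)}$, which together with $\mathcal{H}^{n-1}(\Sigma_{\mathrm{bad}})\le C\int_{\mathcal U_r}d|\nabla\chi_A|$ cannot hold when the total variation is small.

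The paper's proof does not slice at all. It introduces a compression map $\Phi_r:\mathcal U_\delta\to\mathcal U_r$, $\Phi_r(x)=\Pi_{\partial E}(x)-\frac{r}{\delta}\sdist(x,\partial E)\,\nu(\Pi_{\partial E}(x))$, which is different from the tubular parametrization $\psi$ you use: it squeezes the fixed thick tube $\mathcal U_\delta$ onto the thin one $\mathcal U_r$. On the $r$-independent domain $\mathcal U_\delta$ the relative isoperimetric (Poincar\'e--Wirtinger) inequality supplies a constant $C'(\delta,\partial E,n)$ and, crucially, the power $n/(n-1)$ for free, while controlling tangential and normal variation simultaneously. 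Transporting $A$ through $\Phi_r$ costs a Jacobian factor comparable to $r/\delta$ on the $L^1$ side and a factor comparable to $(\delta/r)^{n/(n-1)}$ on the total-variation side; these combine to exactly $(\delta/r)^{1/(n-1)}$. That is where both the prefactor and the exponent $n/(n-1)$ come from — not from any interpolation. If you want to salvage a slicing argument, you would additionally need an $(n-1)$-dimensional relative isoperimetric estimate on $\partial E$ for the family $\{z:v(z,\cdot)\equiv 1\}$ to control the tangential contribution, which amounts to re-deriving the same isoperimetric input by hand; the compression-map route is the cleaner one.
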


\proof
Denote by: 
\begin{itemize}
\item $\sdist(\cdot,\partial E)$ the signed distance from $\partial E$ positive inside $E$; 
\item $\Pi_{\partial E}$ the projection on $\partial E$;
\item $\nu(\cdot)$ the unit outward normal to $\partial E$,
\item $\{\tau_i(\cdot)\}_{i=1}^{n-1}$ an ortho-normal basis for the tangent space of $\partial E$ made of principle directions.
\end{itemize}
By the smoothness assumption on $\partial E$ and by the choice of $\delta$,  for every $r\in (0,\delta)$ the map
$$
\Phi_r:\mathcal U_\delta\,\to\,\mathcal U_r,\qquad \Phi_r(x)=\Pi_{\partial E}(x)-\frac{r}{\delta}\sdist(x,\partial E)\nu(\Pi_{\partial E}(x)),
$$
is $C^1$. Moreover we have~\cite[Chapter 14.6]{GilTru}
\begin{align*}
& D\Phi_r(x)\big[\nu(\Pi_{\partial E}(x))]=\frac{r}{\delta}\nu(\Pi_{\partial E}(x)),
\\
& D\Phi_r(x) \big[\tau_i(\Pi_{\partial E}(x))\big]=\left(1-\frac{r}{\delta}\sdist(x,\partial E)\kappa_i(\Pi_{\partial E}(x))\right)\tau_i(\Pi_{\partial E}(x)).
\end{align*}
Since $\{\nu(\Pi_{\partial E}(x)),\tau_1(\Pi_{\partial E}(x)),\dots,\tau_{n-1}(\Pi_{\partial E}(x))\}$ form an ortho-normal basis of $\R^n$, and since by the choice of $\delta$ and $x\in\mathcal U_\delta$ we have $\frac{1}{2}<1-\frac{r}{\delta}\sdist(x,\partial E)\kappa_i(\Pi_{\partial E}(x))<\frac{3}{2}$, we can conclude that
\begin{gather}
	\frac{r}{\delta}3^{1-n}<\det D\Phi_r(x)<\frac{r}{\delta}3^{n-1},
	\quad
	\frac{r}{\delta}3^{2-n}<J_{n-1}\Phi_r(x), \label{eq:est-jac}
\end{gather}
where $J_{n-1}$ is the $n-1$-dimenstional Jacobian with respect to $\partial E$.
Next choose an arbitrary $A\subset\R^n$ such that $\chi_A\in BV_{loc}(\R^n)$. Without loss of generality we may assume that
\begin{gather*}
	\|\chi_A\|_{L^1(\mathcal U_r)} \,\leq\, \|1-\chi_A\|_{L^1(\mathcal U_r)}.
\end{gather*}
By the two inequalities above and the area formula we get
\begin{gather*}
\|\chi_A\|_{L^1(\mathcal U_r)}=\int_{\Phi_r^{-1}(A)\cap\mathcal U_\delta}\det D\Phi_r\,dx\leq
3^{n-1}\frac{\delta}{r}\|\chi_{\Phi_r^{-1}(A)}\|_{L^1(\mathcal U_\delta)},
\\
\vert\nabla\chi_A\vert(\mathcal U_r)\geq \frac{\delta}{r}3^{2-n}\vert\nabla\chi_{\Phi_r^{-1}(A)}\vert(\mathcal U_\delta).
\end{gather*}
Moreover by the Poincar\'e-Wirtinger inequality (see for example \cite{Attouch:2006vj} in the proof of Lemma 10.3.2.) we can find $C':=C'(\delta,\partial E,n)>0$ such that, for every $\widetilde A\subset\R^n$ with $\chi_{\widetilde A}\in BV_{loc}(\R^n)$, we have
$$
 \|\chi_{\widetilde A}\|_{L^1(\mathcal U_\delta)}\leq   2\left\|\chi_{\widetilde A} - \frac{1}{\mathcal{L}(\mathcal U_\delta)}\int_{\mathcal U_\delta}\chi_{\widetilde A}\right\|_{L^1(\mathcal U_\delta)}\leq C' \left(\int_{\mathcal U_\delta}d\vert\nabla\chi_{\widetilde A}\vert \right)^{n/n-1}.
$$
Hence
\begin{align*}
\|\chi_A\|_{L^1(\mathcal U_r)}\leq & 3^{n-1}\frac{\delta}{r}\|\chi_{\Phi_r^{-1}(A)}\|_{L^1(\mathcal U_\delta)} \leq 3^{n-1}\frac{\delta}{r} C'\big(\vert\nabla\chi_{\Phi_r^{-1}(A)}\vert(\mathcal U_\delta)\big)^{n/n-1}
\\
\leq &  3^{n-1}\frac{\delta}{r} \Big(\frac{r}{\delta}3^{n-2}\Big)^{n/n-1} C'\big(\vert\nabla\chi_A\vert(\mathcal U_r)\big)^{n/n-1}.
\end{align*}
\endproof
We need another Lemma.
\begin{lemma}\label{lem:compl}
Let $u_\eps:\Omega\to\R$, $\eps>0$ be the family of functions defined in \eqref{eq:def-recovery} and $\Urep$ as in~\eqref{eq:Urep}. Then there exists a constant $C>0$ that only depends on $E,q,\widetilde{W}$, such that for all 
$\varphi\in L^1(\Urep;\{-1,1\})$ with
\begin{align}
	\int_{\Urep} \widetilde{W}(u_\eps)\varphi\,dx \,\geq\,0 \label{eq:L5-cond}
\end{align}
the estimate
\begin{align}
	\big|\{\varphi<0\}\cap \Urep \big| \,&\leq\, C \,\big|\{\varphi>0\}\cap \Urep \big| \label{eq:L5}
\end{align}
holds.
\end{lemma}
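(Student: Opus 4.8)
The plan is to use that on $\Urep$ the weight $\widetilde W(u_\eps)$ is a function of the signed distance $d(x):=\sdist(x,\partial E)$ alone, which equals $1$ on a tubular strip of width comparable to $\eps$ and lies in $[0,1]$ on the slightly larger, still $\eps$-thin, set $\Urep$. Concretely, let $\rho_0:=q^{-1}(1-\lambda)>0$ be the half-width (in rescaled variable) of the core strip $\{\widetilde W(u_\eps)=1\}$, and let $\rho_1>0$ be the half-width with $\Urep=\mathcal U_{\eps\rho_1}$ (for $\eps$ small one may take $\rho_1=q^{-1}(\overline\lambda^{-1}(1-\lambda))$); both are fixed constants depending only on $q$ and $\widetilde W$, and $\rho_0\le\rho_1$. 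For $\eps$ small enough that $\eps\rho_1<\delta/2$ one has $u_\eps=q(d/\eps)$ on $\overline{\mathcal U_{\eps\rho_1}}$ by~\eqref{eq:def-recovery} (there $\eta(2d/\delta)=1$), while $|u_\eps|$ exceeds $\overline\lambda^{-1}(1-\lambda)$ wherever $|d|\ge\eps\rho_1$, using that $q$ is increasing, that $\overline\lambda^{-1}(1-\lambda)<1$, and the structure of $q_\eps$ away from $\partial E$. Hence $\mathcal U_{\eps\rho_0}\subseteq\Urep\subseteq\mathcal U_{\eps\rho_1}$, with $\widetilde W(u_\eps)\equiv 1$ on $\mathcal U_{\eps\rho_0}$ and $0\le\widetilde W(u_\eps)\le 1$ throughout $\Urep$.

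Write $A^+:=\{\varphi>0\}\cap\Urep$ and $A^-:=\{\varphi<0\}\cap\Urep$; since $\varphi\in\{-1,+1\}$ a.e., these partition $\Urep$ up to a null set. Assumption~\eqref{eq:L5-cond} reads $\int_{A^+}\widetilde W(u_\eps)\,\dx\ge\int_{A^-}\widetilde W(u_\eps)\,\dx$; adding $\int_{A^+}\widetilde W(u_\eps)$ to both sides and using $\widetilde W(u_\eps)\ge 0$ together with $\widetilde W(u_\eps)\equiv 1$ on $\mathcal U_{\eps\rho_0}\subseteq\Urep$ gives
\begin{align*}
	2\int_{A^+}\widetilde W(u_\eps)\,\dx\,\ge\,\int_{\Urep}\widetilde W(u_\eps)\,\dx\,\ge\,\big|\mathcal U_{\eps\rho_0}\big|.
\end{align*}
Since $\widetilde W(u_\eps)\le 1$, the left-hand side is at most $2|A^+|$, so $|A^+|\ge\tfrac12|\mathcal U_{\eps\rho_0}|$, whereas trivially $|A^-|\le|\Urep|\le|\mathcal U_{\eps\rho_1}|$. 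It remains to compare the two volumes: parametrising the strip by $\psi(z,t)=z+t\nu(z)$ and invoking the Jacobian estimate~\eqref{eq:jac_est} (equivalently the bounds used in the proof of Lemma~\ref{lem:balotelli}), one has $|\mathcal U_{\eps\rho}|=\int_{\partial E}\int_{-\eps\rho}^{\eps\rho}\det J\psi(z,t)\,dt\,\dH^{n-1}(z)$, hence $(1-C_0\eps\rho)\,2\eps\rho\,\Ha^{n-1}(\partial E)\le|\mathcal U_{\eps\rho}|\le(1+C_0\eps\rho)\,2\eps\rho\,\Ha^{n-1}(\partial E)$ with $C_0$ depending only on the second fundamental form of $\partial E$. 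For $\eps$ below a threshold $\eps_0(\partial E,q,\widetilde W)$ with $C_0\eps\rho_1\le\tfrac12$ this yields $|\mathcal U_{\eps\rho_1}|\le 3\eps\rho_1\Ha^{n-1}(\partial E)$ and $|\mathcal U_{\eps\rho_0}|\ge\eps\rho_0\Ha^{n-1}(\partial E)$, so
\begin{align*}
	|A^-|\,\le\,|\mathcal U_{\eps\rho_1}|\,\le\,3\eps\rho_1\Ha^{n-1}(\partial E)\,\le\,\frac{3\rho_1}{\rho_0}\,|\mathcal U_{\eps\rho_0}|\,\le\,\frac{6\rho_1}{\rho_0}\,|A^+|.
\end{align*}
This proves~\eqref{eq:L5} with $C:=6\rho_1/\rho_0$ (the numerical constants are not optimised), depending only on $q$ and $\widetilde W$ through $\lambda,\overline\lambda$; for the boundedly many relevant $\eps\ge\eps_0$ the constant can simply be enlarged.

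There is no deep obstacle here. The only point that needs care is that $\widetilde W(u_\eps)$ degenerates to $0$ near the outer boundary of $\Urep$, so $|A^-|$ cannot be bounded below by $\int_{A^-}\widetilde W(u_\eps)$ and one cannot argue fibrewise in the distance variable. The resolution is exactly the asymmetry exploited above: bound $|A^-|$ crudely by the full volume $|\Urep|$, and separately squeeze out of~\eqref{eq:L5-cond} a lower bound of order $\eps$ for $|A^+|$ using only the core strip on which $\widetilde W(u_\eps)\equiv 1$; since $|\Urep|$ is itself of order $\eps$ with a comparable constant, the ratio is controlled. The remaining routine matters are the passage to tubular-neighbourhood coordinates and the restriction to $\eps$ small enough that $\overline{\Urep}$ lies in the region where $u_\eps=q(d/\eps)$ and $\psi$ is a diffeomorphism.
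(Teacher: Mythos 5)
Your proof is correct but takes a genuinely different route from the paper's. The paper first transports $\Urep$ onto the fixed $\delta$-strip $\mathcal U_\delta$ via the dilation $\Phi_r$ from the proof of Lemma~\ref{lem:balotelli}, converting hypothesis~\eqref{eq:L5-cond} into a sign condition on an auxiliary function $\tilde\varphi$ with values in $\{-3^{1-n},3^{n-1}\}$ on the $\eps$-independent domain $\mathcal U_\delta$, and then concludes by contradiction: a sequence whose positive set has vanishing measure would converge in $L^1(\mathcal U_\delta)$ to $-3^{1-n}\chi_{\mathcal U_\delta}$, violating the transported sign condition in the limit. You never leave $\Urep$: symmetrising~\eqref{eq:L5-cond} gives $2\int_{A^+}\widetilde W(u_\eps)\ge\int_\Urep\widetilde W(u_\eps)$, from which $|A^+|$ is bounded below by half the volume of the core strip on which $\widetilde W(u_\eps)\equiv 1$, while $|A^-|$ is bounded above trivially by $|\Urep|$; both volumes scale like $2\eps\rho\,\Ha^{n-1}(\partial E)$ by the Jacobian estimate~\eqref{eq:jac_est}, so their ratio is bounded. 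Your version is more elementary (no change of variables, no compactness or contradiction), produces an explicit constant $C=6\rho_1/\rho_0$, and makes transparent that the lemma uses nothing beyond the one-dimensional profile $q$ and the tubular-neighbourhood volume estimate. One small imprecision at the end: the phrase about ``boundedly many relevant $\eps\ge\eps_0$'' is not quite right (there is a continuum of such $\eps$); the clean statement is simply that the lemma is asserted for $\eps<\eps_0(E,q,\widetilde W)$, a restriction that is implicit in the paper's own proof too, since for larger $\eps$ the strip $\Urep$ need not lie in the region where $\psi$ and $\Phi_r$ are diffeomorphisms.
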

\begin{proof}
Using the transformation $\Phi_r$ from Lemma \ref{lem:balotelli} with $r=r(\eps)=\eps r(1)$ we obtain from \eqref{eq:L5-cond}
\begin{align}
	0\,&\leq\, \int_\Urep \widetilde{W}(u_\eps)\varphi\,dx \,=\, \int_{U_{r(\eps)}} W(q(\frac{d}{\eps}))\varphi\,dx \notag\\
	&=\, \int_{U_\delta} \widetilde{W}(q(\frac{r(1)d}{\delta}))\varphi\circ\Phi_r \det D\Phi_r \notag\\
	&\leq\, \frac{\eps r(1)}{\delta} \int_{U_\delta} \widetilde{W}(q(\frac{r(1)d}{\delta}))\tilde\varphi\,dx, \label{eq:L5-1}
\end{align}
where we have used \eqref{eq:est-jac} and where we have defined $\tilde\varphi:= -3^{1-n}\Chi_{\{\varphi\circ\Phi_r<0\}} + 3^{n-1}\Chi_{\{\varphi\circ\Phi_r>0\}}$.

Clearly we have $\{\tilde{\varphi}>0\}=\{\varphi\circ\Phi_r>0\}$ and $\{\tilde{\varphi}<0\}=\{\varphi\circ\Phi_r<0\}$. Using once more the transformation formula and \eqref{eq:est-jac} we deduce
\begin{align}
	\frac{\big|\{\varphi<0\}\cap \{\widetilde{W}(u_\eps)>0\}\big|}{\big|\{\varphi>0\}\cap \{\widetilde{W}(u_\eps)>0\}\big|}\,&\leq\, \frac{3^{n-1}}{3^{1-n}}\frac{\big|\{\tilde\varphi<0\}\cap U_\delta \big|}{\big|\{\tilde\varphi>0\}\cap U_\delta\big|}.
\end{align}
The claim thus follows if we can show the following statement:\\
For all $\tilde\varphi\in L^1(U_\delta;\{-3^{1-n},3^{n-1}\})$ with
\begin{align}
	\int_{U_\delta} \widetilde{W}(q(\frac{r(1)d}{\delta}))\tilde\varphi\,dx \,\geq\,0 \label{eq:L5-2}
\end{align}
we have
\begin{align*}
	\big|\{\tilde\varphi<0\}\cap U_\delta\big| \,&\leq\, C \,\big|\{\tilde\varphi>0\}\cap U_\delta\big|.
\end{align*}
Since $\{\tilde\varphi>0\}\cap U_\delta=U_\delta\setminus \{\tilde\varphi<0\}$ it is sufficient to prove 
\begin{align*}
	\big|\{\tilde\varphi>0\}\cap U_\delta\big| \,&\geq\, c_0
\end{align*}
for some $c_0>0$. We argue by contradiction and suppose that there is a sequence $(\tilde\varphi_k)_{k\in\N}$ satisfying condition \eqref{eq:L5-2} with 
\begin{gather*}
	\big|\{\tilde\varphi_k>0\}\cap U_\delta\big|\,\to\, 0\quad\text{ as }k\to\infty,
\end{gather*}
which immediately implies $\tilde\varphi_k\to -3^{1-n}\Chi_{U_\delta}$ in $L^1(U_\delta)$ as $k\to\infty$. But then
\begin{align*}
	\lim_{k\to\infty} \int_{U_\delta} \widetilde{W}(q(\frac{r(1)d}{\delta}))\tilde\varphi_k\,dx \,<\,0,
\end{align*}
which contradicts to \eqref{eq:L5-2}.
\end{proof}
We are now in a position to proceed with the proof of Proposition \ref{prop:dr}.

\proof[Proof of Proposition \ref{prop:dr}]

In the next calculations we fix $n=3$. The case $n=2$ follows in the same manner by a somewhat simpler calculation. We have
\begin{align*}
	\int_\Urep \frac{\widetilde{W}(u_\eps)}{\eps}\,dx &=\,
	\int_{-r(\eps)}^{r(\eps)}\int_{\{\dist(\cdot,\partial E)=s\}} \frac{\widetilde{W}(q_\eps(s))}{\eps}\,\dH^{2}ds\\
	&=\, \int_{-r(\eps)}^{r(\eps)}\frac{\widetilde{W}(q_\eps(t))}{\eps}\int_{\partial E}J_{2}\Psi(t,z) \,\dH^{2}(z)dt
	\\
	&=\,\int_{-r(\eps)}^{r(\eps)}\frac{\widetilde{W}(q_\eps(t))}{\eps}\int_{\partial E}\Pi_{i=1}^{2}(1-t\kappa_i(z)) \,\dH^{2}(z)dt\\
	&=\,\int_{-r(\eps)}^{r(\eps)}\frac{\widetilde{W}(q_\eps(t))}{\eps}\int_{\partial E}(1-tH(z)+t^2K(z))\,\dH^{2}(z)dt
\\
	&=\,\mathcal{H}^2(\partial E)\int_{-r(1)}^{r(1)}\widetilde{W}(q(r))\,dr+\int_{-r(\eps)}^{r(\eps)}\frac{\widetilde{W}(q_\eps(t))}{\eps}\int_{\partial E}t^2K(z)\,\dH^{2}(z)dt
\\
	&\leq\,\mathcal{H}^2(\partial E)\int_{-r(1)}^{r(1)}\widetilde{W}(q(r))\,dr+\int_{-r(\eps)}^{r(\eps)}\frac{t^2}{\eps}\Big\vert\int_{\partial E}K(z)\,\dH^{2}(z)\Big\vert dt\\
	&=\,\mathcal{H}^2(\partial E)\int_{-r(1)}^{r(1)}\widetilde{W}(q(r))\,dr+\frac{2r(1)\eps^2}{3}|\int_{\partial E}K\,d\Ha^2|.
\end{align*}
We therefore obtain with 
$\widetilde c_0:=\int_{-r(1)}^{r(1)}\widetilde{W}(q(r))\,dr$,
$$
 \tilde{\A}_{u_\eps,\eps}(1)=\int_\Urep\frac{\widetilde{W}(u_\eps)}{\eps}\,dx\,=\, {\widetilde c_0}S+O(\eps^2),
$$
which proves the second equality in \eqref{eq:prop3}.

We now proceed by contradiction. Suppose we can find $\varphi\in BV(\Urep;\,\{-1,1\})$ such that 
\begin{align}
\tilde{\A}_{u_\eps,\eps}(\varphi)<\tilde{\A}_{u_\eps,\eps}(1)=\widetilde c_0 S+O(\eps^2), \label{eq:nabla-small}
\end{align}
we then have
\begin{align}\label{eq:chloe}
\frac{1}{\widetilde c_0 \eps^\frac{1}{2}}\int_\Urep\frac{\widetilde W(\overline{\lambda}u_\eps)}{\eps}\,\mathrm{d}\vert\nabla\varphi\vert<\widetilde c_0 S+O(\eps^2).
\end{align}
Without loss of generality we can suppose that 
\begin{gather}
	\int_\Urep \widetilde W(u)\varphi\,dx\geq 0. \label{eq:pos-W}
\end{gather}
With $r(1)=q^{-1}(1-\lambda)$ and $r(\eps) = \eps r(1)$ as above we have 
$$
\widetilde W(u_\eps)\leq\chi_{\{\vert u_\eps\vert\le 1-\lambda\}}=\chi_{\mathcal U_{\eps r(1)}}\,\le\,\widetilde W(\overline{\lambda}u_\eps).
$$
We now set $\psi:=\frac{1-\varphi}{2}$, $\psi:\Omega\,\to\, \{0,1\}$ and observe that by \eqref{eq:pos-W} and Lemma \ref{lem:compl} 
\begin{align*}
	\mathcal L^n(\mathcal U_{\eps r(1)}\cap\{\psi > 0\}) \,\leq\,  C\mathcal L^n(\mathcal U_{\eps r(1)}\cap\{\psi=0\}).
\end{align*}
By this estimate and Lemma \ref{lem:balotelli} we then have
\begin{align*}
	\int_\Urep\widetilde{W}(u_\eps)\psi\,dx \leq &
	\mathcal L^n(\mathcal U_{\eps r(1)}\cap\{\psi>0\})\\
	\leq\, &\max\{1,C\} \min\{\mathcal L^n(\mathcal U_{\eps r(1)}\cap\{\psi>0\}),\mathcal L^n(\mathcal U_{\eps r(1)}\cap\{\psi=0\})\} \\
	\leq\,& C\eps^{-\frac{1}{n-1}}\mathcal H^{n-1}(\mathcal U_{\eps r(1)}\cap \partial^*\{\psi>0\})
	\\
	\leq & C \eps^{-\frac{1}{n-1}}\Big(\int_\Urep\widetilde W(\overline{\lambda}u_\eps)\,d\vert\nabla\psi\vert\Big)^{\frac{n}{n-1}},
\end{align*}
and hence, taking into account of \eqref{eq:chloe} and noting that $\psi \equiv 1$ is excluded by~\eqref{eq:pos-W},
\begin{gather}
	\int_{\Urep}\frac{\widetilde W(u_\eps)}{
\eps}\psi\,dx\leq C\Big(	\int_\Urep \frac{\widetilde W(\overline{\lambda}u_\eps)}{\eps}\,\mathrm{d}\vert\nabla\psi\vert\Big)^{\frac{n}{n-1}}
\leq C \eps^{\frac{\frac{1}{2}}{n-1}}S^{\frac{1}{n-1}}\int_\Urep \frac{\widetilde W(\overline{\lambda}u_\eps)}{\eps}\,\mathrm{d}\vert\nabla\psi\vert. \label{eq:est-Wpsi}
\end{gather}
But then we can find $\eps_0:=\eps_0(S,n,\partial E)$ such that for every $\eps<\eps_0$ if $\psi$ is not constant  zero (and hence $\varphi=1$ a.e.) we have
\begin{align*}
	0>&\;2\int_\Urep\frac{1}{\eps}\widetilde{W}(u)\psi\,dx-\frac{2}{\eps^\frac{1}{2}}\int_\Urep
	\frac{\widetilde W(\overline{\lambda}u_\eps)}{\eps}\,\mathrm{d}\vert\nabla\psi\vert
	\\
	=&\;\tilde{\A}_{u_\eps,\eps}(1)-\tilde{\A}_{u_\eps,\eps}(\varphi),
\end{align*}
which contradicts the minimality of $\varphi$. Therefore $\varphi=1$ is optimal and the conclusion follows.
\endproof

The functional $\tilde{\A}_{u_\eps,\eps}$ as written above requires its argument $\phi$ to be a function taking only values in $\{-1,1\}$. This makes it not suitable for computation. We therefore return to the original diffuse interface formulation. In the following, we will estimate the infimum of the diffuse functional $\A_{u_\eps, \eps}$ from below by the sharp interface version $\tilde{\A}_{u_\eps,\eps}$
\begin{prop}
We have, for $\eps$ small enough and $u_\eps$ being the optimal profile construction used above,  that  $\frac{1}{\eps^{1/2-\sigma/2}}\left( \inf_{\phi \in W^{1,2}(\Omega, \R)}A_{u_\eps,\epsilon}(\phi)  - \int_\Omega \frac{1}{\eps}  \widetilde{W}(u_\eps)\right)^2 \to 0$ as $\eps \to 0$. 
\end{prop}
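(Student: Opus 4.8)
The plan is to prove that, for $\eps$ small,
\begin{align*}
	\Big\lvert \inf_{\phi\in W^{1,2}(\Omega)}\A_{u_\eps,\eps}(\phi) - \int_\Omega\tfrac1\eps\widetilde{W}(u_\eps)\,\dx \Big\rvert \,\le\, C\eps^{1/4};
\end{align*}
since $\eps^{1/4}=o(\eps^{1/4-\sigma/4})$ for $\sigma>0$, squaring and multiplying by $\eps^{-(1/2-\sigma/2)}$ then gives a quantity of order $\eps^{\sigma/2}\to0$, which is the assertion. The bound $\inf_\phi\A_{u_\eps,\eps}(\phi)\le\int_\Omega\tfrac1\eps\widetilde{W}(u_\eps)$ is exactly~\eqref{eq:A_upper_bd}. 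For the reverse inequality I would use that, by Proposition~\ref{prop:dr}, $\int_\Omega\tfrac1\eps\widetilde{W}(u_\eps)=\tilde{\A}_{u_\eps,\eps}(1)=\inf_{\psi\in BV(\Urep;\{-1,1\})}\tilde{\A}_{u_\eps,\eps}(\psi)$, so that it suffices to associate to a minimizer $\phi$ of $\A_{u_\eps,\eps}$ (which exists by the proposition preceding Theorem~\ref{thm:main}) a competitor $\psi\in BV(\Urep;\{-1,1\})$ with $\tilde{\A}_{u_\eps,\eps}(\psi)\le\A_{u_\eps,\eps}(\phi)+C\eps^{1/4}$. This ``de-diffusion'' step is the heart of the matter.

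First I would record a priori bounds for the minimizer $\phi$. From $\A_{u_\eps,\eps}(\phi)\le\int_\Omega\tfrac1\eps\widetilde{W}(u_\eps)\le C$, together with $\widetilde{W}(\overline{\lambda}u_\eps)\equiv1$ on $\Urep$ (while $\widetilde{W}(u_\eps)\le\chi_\Urep$), the definition~\eqref{eq:def-Aeps} yields $\int_\Urep W(\phi)\,\dx\le C\eps^{3/2}$, and $\abs{\Urep}\le C\eps$ since $\Urep$ is a tube of radius $\sim\eps$ around $\partial E$. Truncating $\phi$ into $[-1,1]$ does not increase $W(\phi)$ or $\abs{\nabla\phi}^2$ pointwise, and it changes the first term of $\A_{u_\eps,\eps}$ by at most $\tfrac1\eps\int_\Urep\widetilde{W}(u_\eps)(\abs{\phi}-1)^+\,\dx\le\tfrac1\eps\int_\Urep\widetilde{W}(u_\eps)\sqrt{W(\phi)}\,\dx\le\tfrac1\eps\abs{\Urep}^{1/2}\big(\int_\Urep W(\phi)\,\dx\big)^{1/2}\le C\eps^{1/4}$, using $(\abs{\phi}-1)^+\le\sqrt{W(\phi)}$ and Cauchy--Schwarz. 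Hence I may replace $\phi$ by its truncation, at the cost of an $O(\eps^{1/4})$ error, and assume $-1\le\phi\le1$.

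For the de-diffusion step I would apply Young's inequality $9W(\phi)+8\abs{\nabla\phi}^2\ge2\sqrt{72}\,\sqrt{W(\phi)}\abs{\nabla\phi}$, pulling out the common nonnegative weight $\eps^{-3/2}\widetilde{W}(\overline{\lambda}u_\eps)$ of the second and third terms of $\A_{u_\eps,\eps}$, to obtain $\A_{u_\eps,\eps}(\phi)\ge\lvert\int_\Urep\tfrac1\eps\widetilde{W}(u_\eps)\phi\rvert+\tfrac{2\sqrt{72}}{\eps^{3/2}}\int_\Urep\widetilde{W}(\overline{\lambda}u_\eps)\sqrt{W(\phi)}\abs{\nabla\phi}\,\dx$; by the coarea formula and $\widetilde{W}(\overline{\lambda}u_\eps)\equiv1$ on $\Urep$ the last integral equals $I:=\int_{-1}^1\sqrt{W(\theta)}\,P(\theta)\,\mathrm{d}\theta$ with $P(\theta):=\Ha^{n-1}(\{\phi=\theta\}\cap\Urep)=\abs{\nabla\chi_{\{\phi\le\theta\}}}(\Urep)$. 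Since $\sqrt{W}\ge\tfrac38$ on $[-\tfrac12,\tfrac12]$ we get $\int_{-1/2}^{1/2}P(\theta)\,\mathrm{d}\theta\le\tfrac83 I$, hence there is a level $\theta_0\in[-\tfrac12,\tfrac12]$ with $\{\phi\le\theta_0\}$ of finite perimeter in $\Urep$ and $P(\theta_0)\le\tfrac83 I$; set $\psi:=1-2\chi_{\{\phi\le\theta_0\}}\in BV(\Urep;\{-1,1\})$, so $\tilde{\A}_{u_\eps,\eps}(\psi)=\lvert\int_\Urep\tfrac1\eps\widetilde{W}(u_\eps)\psi\rvert+\tfrac{2}{\eps^{3/2}}P(\theta_0)$. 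Because $\psi-\phi=1-\phi\ge0$ on $\{\phi>\theta_0\}$ and $\psi-\phi=-1-\phi\le0$ on $\{\phi\le\theta_0\}$, and $\abs{\theta_0}\le\tfrac12$ forces $\abs{\psi-\phi}\le4\sqrt{W(\phi)}$ everywhere, the Cauchy--Schwarz estimate from the previous paragraph gives $\lvert\int_\Urep\tfrac1\eps\widetilde{W}(u_\eps)(\psi-\phi)\rvert\le C\eps^{1/4}$; moreover $\tfrac{2}{\eps^{3/2}}P(\theta_0)\le\tfrac{16}{3\eps^{3/2}}I\le\tfrac{2\sqrt{72}}{\eps^{3/2}}I$ since $\tfrac{16}{3}<2\sqrt{72}$, and by the displayed lower bound this is $\le\A_{u_\eps,\eps}(\phi)-\lvert\int_\Urep\tfrac1\eps\widetilde{W}(u_\eps)\phi\rvert$. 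Adding these two bounds gives $\tilde{\A}_{u_\eps,\eps}(\psi)\le\A_{u_\eps,\eps}(\phi)+C\eps^{1/4}$, hence $\inf_\psi\tilde{\A}_{u_\eps,\eps}(\psi)\le\inf_\phi\A_{u_\eps,\eps}(\phi)+C\eps^{1/4}$, and with Proposition~\ref{prop:dr} and~\eqref{eq:A_upper_bd} the claimed estimate follows.

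I expect the de-diffusion step to be the main obstacle: the diffuse phase field $\phi$ must be replaced by a sharp $\{-1,1\}$-valued $\psi$ without paying more than the diffuse Modica--Mortola type transition energy $\eps^{-3/2}\int\widetilde{W}(\overline{\lambda}u_\eps)\sqrt{W(\phi)}\abs{\nabla\phi}\,\dx$ already present in $\A_{u_\eps,\eps}(\phi)$. This is precisely where the scaling relationship between the coefficients $9$, $8$ in $\A_{u_\eps,\eps}$ and the coefficient $1$ in $\tilde{\A}_{u_\eps,\eps}$ enters (concretely, through $\tfrac{16}{3}<2\sqrt{72}$), and where one has to verify that the simultaneous perturbation of the $L^1$-term is only of order $\eps^{1/4}=o(\eps^{1/4-\sigma/4})$ rather than of order one --- which is the reason for selecting the cutting level $\theta_0$ in the fixed region $[-\tfrac12,\tfrac12]$ on which $\sqrt{W}$ is bounded away from zero.
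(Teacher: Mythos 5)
Your argument is correct, and it shares the paper's overall strategy: the trivial upper bound via $\phi\equiv 1$, and, for the lower bound, the construction of a $\{-1,1\}$-valued competitor from a well-chosen level set of $\phi$, whose perimeter is controlled by the diffuse transition energy through Young's inequality and the coarea formula, finally plugging into Proposition~\ref{prop:dr}. Where you genuinely diverge is the estimate of the change in the $L^1$-term when passing from $\phi$ to the sharp $\psi$. The paper performs a pointwise ``bad set'' analysis of $\phi+\eps^{-1/2}W(\phi)-\tilde\phi$, spending the leftover $\eps^{-3/2}W(\phi)$ contribution from the coefficient split $9=8+1$ and arriving at an $O(\eps^{1/2})$ error. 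You instead invoke the a priori bound $\int_\Urep W(\phi)\,\dx\le C\eps^{3/2}$ for a minimizer (available since $\A_{u_\eps,\eps}(\phi)\le\A_{u_\eps,\eps}(1)\le C$ and $\widetilde{W}(\overline{\lambda}u_\eps)\equiv1$ on $\Urep$), combine it with $\abs{\psi-\phi}\le C\sqrt{W(\phi)}$ and Cauchy--Schwarz, and obtain an $O(\eps^{1/4})$ error, which is also amply sufficient after squaring against $\eps^{-(1/2-\sigma/2)}$. Your variant is somewhat more robust: it does not depend on the precise decomposition $9=8+1$ but only on $2\sqrt{9\cdot 8}$ comfortably exceeding $16/3$; the paper's variant is sharper in the error exponent, but that gain is immaterial here. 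A small remark: the preliminary truncation of $\phi$ into $[-1,1]$ is not in fact needed (the coarea selection of $\theta_0$ and the pointwise bound $\abs{\psi-\phi}\le C\sqrt{W(\phi)}$ hold without it, the latter with an even better constant outside $[-1,1]$), though it does no harm and you correctly preserve the a priori bounds under truncation.
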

\begin{proof}
We will show the statement by reducing it to the case of the function $\phi$ only admitting discrete values. We set again $\Urep := \{\widetilde{W}(u_\eps) >0 \}$, note  that $\widetilde{W}(\overline{\lambda} u) = 1$ on $\Urep$ and calculate for any $\phi \in W^{1,2}(\Omega)$ that
\begin{align*}
& \quad\int_\Urep  \left(\frac{8}{\eps^{\frac{3}{2}}} +\eps \right)W(\phi) \,\dx +
\int_\Urep \left(\frac{8}{\eps^{\frac{3}{2}}} \widetilde{W}(\overline{\lambda}u_\eps)+\eps \right)  \abs{\nabla \phi}^2 \,\dx \\
&\ge \int_{\Urep} \frac{8}{\eps^{\frac{3}{2}}}\sqrt{W(\phi)} \,\abs{\nabla \phi} \,\dx\\
&\ge \int_{\Urep \cap \{\phi\in (-1/2, 1/2] \} } \frac{1}{\eps^{\frac{3}{2}}} \,\abs{\nabla \phi} \,\dx.
\end{align*}
We now fix $s_0 \in (-1/2,1/2]$ such that 
\begin{align*}
\int_{\Urep \cap \{\phi \in (-1/2, 1/2]\}} \,\abs{\nabla \phi} \,\dx
&= \int_{-1/2}^{1/2} \int_{\partial \{ \phi < s\} \cap \Urep} d\Ha^{n-1}(x) \,\mathrm{d}s \\
&\ge \int_{\partial \{ \phi < s_0\} \cap \Urep}  d\Ha^{n-1}(x)
\end{align*}
and set $\tilde{\phi} := 2\chi_{\{\phi \ge s_0\}} -1$. It follows that
\begin{align}
\tilde{\A}_{u_\eps,\eps}(\tilde{\phi}) &= \abs{\int_\Urep  \frac{1}{\eps} \widetilde{W}(u_\eps) \tilde{\phi} \,\dx} + \int_\Urep \frac{\widetilde{W}(\overline{\lambda}u_\eps)}{\eps^{\frac{3}{2}}} \,d\!\abs{\nabla \tilde{\phi}}   \nonumber \\
&\le \abs{\int_\Omega  \frac{1}{\eps} \widetilde{W}(u_\eps) \tilde{\phi} \,\dx} + 
\int_\Omega \left( \frac{8}{\eps^{\frac{3}{2}}}+\eps\right) W(\phi) \,\dx +
\int_\Omega \left(\frac{8 \widetilde{W}(\overline{\lambda}u_\eps)}{\eps^{\frac{3}{2}}} +\eps \right) \abs{\nabla \phi}^2 \,\dx. \label{eq:grad_phi_est}
\end{align}

In the final step, we need to estimate the potential gain in the first term by switching from $\phi$ to $\tilde{\phi}$ against the remaining term $\frac{1}{\eps^{\frac{3}{2}}}W(\phi)$. So we calculate again assuming without loss of generality that $\int_\Urep  \widetilde{W}(u_\eps) \phi \ge 0$. We have
$$
\int_\Urep \left( \frac{1}{\eps} \widetilde{W}(u_\eps)\phi + \frac{1}{\eps^{\frac{3}{2}}}  W(\phi) \right)\dx \ge \int_\Urep \frac{1}{\eps} \widetilde{W}(u_\eps) \left( \phi+ \frac{1}{\eps^{\frac{1}{2}}}  W(\phi) \right) \,\dx
$$
and split 
\begin{align*}
\Urep &= (\{\phi \ge s_0 \} \cup \{\phi < s_0 \} ) \cap \Urep\\
&=  (\{\tilde{\phi} = 1 \} \cup \{\tilde{\phi }  = -1 \} ) \cap \Urep.
\end{align*}
Now, on the set $\{\phi \ge s_0 \}$, we estimate the `bad set', where we the term becomes smaller when switching to the discrete $\tilde{\phi}$,
\begin{align*}
& \phi + \frac{1}{\eps^\frac{1}{2}} W(\phi) < \tilde{\phi} \\
\iff & (1-\phi)^2(1+\phi)^2 < 4 \eps^\frac{1}{2}  (1-\phi) \\
\Longrightarrow & 1-\phi >0, \quad 1-\phi < \frac{4\eps^\frac{1}{2} }{(1+s_0)^2}.
\end{align*}
Thus, the `bad set' is contained in the set $\{ 1-  \frac{4\eps^\frac{1}{2} }{(1+s_0)^2} < \phi < 1\}$ and we have
\begin{align}
&\;\int_{ \{\phi \ge s_0\} \cap \Urep} \frac{1}{\eps} \widetilde{W}(u_\eps) \left( \phi+ \frac{1}{\eps^\frac{1}{2} } W(\phi) \right) \,\dx \nonumber \\
\ge &\; \int_{ \{\phi\ge s_0\} \cap \Urep} \frac{1}{\eps} \widetilde{W}(u_\eps) \tilde{\phi}  \,\dx 
- \int_{\{ 1-  \frac{4\eps^\frac{1}{2} }{(1+s_0)^2} < \phi < 1\} \cap \Urep} \frac{1}{\eps}\widetilde{W}(u_\eps) (1-\phi) \,\dx \nonumber \\
\ge &\;  \int_{ \{\phi\ge s_0\} \cap \Urep} \frac{1}{\eps} \widetilde{W}(u_\eps) \tilde{\phi}  \,\dx -
 \frac{4\eps^\frac{1}{2} }{(1+s_0)^2} \left( \tilde{c}_0 S + O(\eps^2) \right).  \label{eq:phi_est_1}
\end{align}
On $\{\phi < s_0\}\cap \Urep$ we make a similar estimate. Note that here
\begin{align*}
& \phi + \frac{1}{\eps^\frac{1}{2}} W(\phi) < \tilde{\phi} \\
\iff & (1-\phi)^2(1+\phi)^2 < 4 \eps^\frac{1}{2}  (-1-\phi) \\
\Longrightarrow & -1-\phi >0, \quad -1-\phi < \eps^\frac{1}{2} .
\end{align*}
This implies that
\begin{align}
&\;\int_{ \{\phi< s_0\} \cap \Urep} \frac{1}{\eps} \widetilde{W}(u_\eps) \left( \phi+ \frac{1}{\eps^\frac{1}{2} } W(\phi) \right) \,\dx  \nonumber \\
\ge &\;  \int_{ \{\phi <s_0\} \cap \Urep} \frac{1}{\eps} \widetilde{W}(u_\eps) \tilde{\phi}  \,\dx -
\int_{ \{ -1-\eps^\frac{1}{2}  < \phi < -1 \}\cap \Urep} \frac{1}{\eps} \widetilde{W}(u_\eps) (-1 -\phi) \dx \nonumber \\
\ge &\;  \int_{ \{\phi <s_0\}\cap \Urep } \frac{1}{\eps} \widetilde{W}(u_\eps) \tilde{\phi}  \,\dx  - \eps^\frac{1}{2}  \left( \tilde{c}_0 S + O(\eps^2) \right). \label{eq:phi_est_2}
\end{align}
Adding~\eqref{eq:phi_est_1} and~\eqref{eq:phi_est_2} we thus get 
\begin{align}
\int_\Urep \left( \frac{1}{\eps} \widetilde{W}(u_\eps)\phi + \widetilde{W}(\overline{\lambda}u_\eps)\frac{1}{\eps^{\frac{3}{2}}}  W(\phi) \right)\dx \ge   \int_\Urep \frac{1}{\eps} \widetilde{W}(u_\eps) \tilde{\phi}  \,\dx  - O(\eps^\frac{1}{2} ) \label{eq:phi_est_2.5}
\end{align}

It remains to show that we also have
$$
\int_\Urep \left( \frac{1}{\eps} \widetilde{W}(u_\eps)\phi + \widetilde{W}(\overline{\lambda}u_\eps)\frac{1}{\eps^{\frac{3}{2}}}  W(\phi) \right)\dx \ge  - \int_\Urep \frac{1}{\eps} \widetilde{W}(u_\eps) \tilde{\phi}  \,\dx  - O(\eps^\frac{1}{2} ),
$$
since we need to estimate the absolute value of the $\tilde{\phi}$-term and can only assume without loss of generality that $\int_\Urep \frac{1}{\eps} \widetilde{W}(u_\eps)\phi \,\dx \ge 0$. Note however that
$$
 \int_\Urep \frac{1}{\eps} \widetilde{W}(u_\eps) \tilde{\phi}  \,\dx
= \int_{ \{\phi\ge s_0\} \cap \Urep} \frac{-1}{\eps}  \widetilde{W}(u_\eps) \,\dx + 
 \int_{ \{\phi< s_0\} \cap \Urep} \frac{1}{\eps}  \widetilde{W}(u_\eps) \,\dx.
$$
We can thus estimate
\begin{align}
&\;\int_{ \{\phi\ge s_0\} \cap \Urep} \frac{1}{\eps} \widetilde{W}(u_\eps) \left( \phi+ \frac{1}{\eps^\frac{1}{2} } W(\phi) +\tilde{\phi }\right) \,\dx \nonumber \\
= & \;\int_{ \{\phi\ge s_0\} \cap \Urep} \frac{1}{\eps} \widetilde{W}(u_\eps) \left( \phi+ \frac{1}{\eps^\frac{1}{2} } W(\phi)+1\right) \,\dx \nonumber \\
\ge &\; 2 \int_{ \{\phi\ge s_0\} \cap \Urep} \frac{1}{\eps} \widetilde{W}(u_\eps) \phi\,\dx +
 \int_{ \{\phi\ge s_0\} \cap \{1<\phi<1+4\eps^\frac{3}{2} \} \cap \Urep} \frac{1}{\eps} \widetilde{W}(u_\eps) (1-\phi)\,\dx \nonumber \\
\ge &\; 2 \int_{ \{\phi\ge s_0\} \cap \Urep} \frac{1}{\eps} \widetilde{W}(u_\eps) \phi\,\dx - 4\eps^\frac{1}{2}   \left( \tilde{c}_0 S + O(\eps^2) \right), \label{eq:phi_est_3}
\end{align}
where we have used that
\begin{align*}
& \phi + 1+ \frac{1}{\eps^\frac{1}{2}} W(\phi) < 2\phi \\
\iff & (1-\phi)^2(1+\phi)^2 < 4 \eps^\frac{1}{2}  (\phi-1) \\
\Longrightarrow & \,\phi-1 >0, \quad \phi-1 < 4\eps^\frac{1}{2} .
\end{align*}
Similarly, we have
\begin{align}
&\;\int_{ \{\phi< s_0\} \cap \Urep} \frac{1}{\eps} \widetilde{W}(u_\eps) \left( \phi+ \frac{1}{\eps^\frac{1}{2} } W(\phi) +\tilde{\phi }\right) \,\dx \nonumber \\
= &\; \int_{ \{\phi< s_0\} \cap \Urep} \frac{1}{\eps} \widetilde{W}(u_\eps) \left( \phi+ \frac{1}{\eps^\frac{1}{2} } W(\phi)-1\right) \,\dx \nonumber \\
\ge &\; 2 \int_{ \{\phi<s_0\} \cap \Urep} \frac{1}{\eps} \widetilde{W}(u_\eps) \phi\,\dx -
 \int_{ \{\phi< s_0\} \cap \{-1<\phi < \frac{4\eps^\frac{1}{2}}{(1+s_0)^2} -1\} \cap \Urep} \frac{1}{\eps} \widetilde{W}(u_\eps) (\phi+1)\,\dx \nonumber \\
\ge &\; 2 \int_{ \{\phi< s_0\} \cap \Urep} \frac{1}{\eps} \widetilde{W}(u_\eps) \phi\,\dx -  \frac{4\eps^\frac{1}{2}}{(1+s_0)^2}   \left( \tilde{c}_0 S + O(\eps^2) \right),  \label{eq:phi_est_4}
\end{align}
where we have used that
\begin{align*}
& \phi - 1+ \frac{1}{\eps^\frac{1}{2}} W(\phi) < 2\phi \\
\iff & (1-\phi)^2(1+\phi)^2 < 4 \eps^\frac{1}{2}  (\phi+1) \\
\Longrightarrow & \,\phi+1 >0, \quad \phi+1 < \frac{4\eps^\frac{1}{2} }{(1+s_0)^2}.
\end{align*}
Adding now~\eqref{eq:phi_est_3} and~\eqref{eq:phi_est_4} we get
\begin{align}
&\;\int_{ \Urep } \frac{1}{\eps} \widetilde{W}(u_\eps) \left( \phi+ \frac{1}{\eps^\frac{1}{2} } W(\phi) \right) \,\dx \nonumber \\
\ge &\; 2 \int_{ \{\phi< s_0\} \cap \Urep} \frac{1}{\eps} \widetilde{W}(u_\eps) \phi\,\dx -\int_{ \cap \Urep } \frac{1}{\eps} \widetilde{W}(u_\eps) \tilde{\phi}\,\dx - O(\eps^\frac{1}{2} )\nonumber \\
 \ge &\; -\int_{ \Urep} \frac{1}{\eps} \widetilde{W}(u_\eps) \tilde{\phi}\,\dx - O(\eps^\frac{1}{2} ). \label{eq:phi_est_5}
\end{align}
Equations~\eqref{eq:phi_est_5} and \eqref{eq:phi_est_2.5} together yield
\begin{align}
\int_\Omega \left( \frac{1}{\eps} \widetilde{W}(u_\eps)\phi +  \frac{1}{\eps^{\frac{3}{2}}} \widetilde{W}(\overline{\lambda}u_\eps) W(\phi) \right)\dx
 \ge  \abs{\int_{ \Omega } \frac{1}{\eps} \widetilde{W}(u_\eps) \tilde{\phi}\,\dx} - O(\eps^\frac{1}{2} ). \label{eq:phi_est_final}
\end{align}

Adding \eqref{eq:grad_phi_est} and \eqref{eq:phi_est_final}, and using the result from Proposition~\ref{prop:dr}, we get that
$$
\int_{\Omega} \frac{1}{\eps}\widetilde{W}(u_\eps)\,\dx - O(\sqrt{\eps}) \le \tilde{\A}_{u_\eps,\epsilon}(\tilde{\phi})- O(\sqrt{\eps}) \le  \A_{u_\eps,\eps}(\phi) \le \int_{\Omega} \frac{1}{\eps}\widetilde{W}(u_\eps)\,\dx
$$
for any $\phi \in W^{1,2}(\Omega)$ and $\eps$ sufficiently small. The upper bound here is nothing but the trivial estimate~\eqref{eq:A_upper_bd}. This yields the desired convergence.
\end{proof}
The preceding arguments show that the sequence $u_\eps$ is a suitable recovery sequence. This finishes the proof of Theorem~\ref{thm:main}.
\section{Lim\,inf inequality.} 
Most of the assertions of Theorem \ref{thm:lower} follow immediately from \cite{RoeSc06}. The difficult part is to prove that $\mu$ is concentrated on a connected structure in the sense of the corresponding statement in Theorem \ref{thm:lower}. 
In the following we assume that \eqref{eq:pf-conf}, \eqref{eq:liminf3}-\eqref{eq:liminf2} hold. Without loss of generality we can pass to a subsequence that realizes the $\liminf$ in \eqref{eq:liminf9} and for which
\begin{align*}
	\alpha_\eps \,&\to\, \alpha\quad\text{ as Radon measures},\\
	\alpha_\eps \,&:=\, \frac{1}{\eps}\Big(\eps\Delta u_\eps-\frac{W'(u_\eps)}{\eps}\Big)^2\mathcal L^n{\lfloor\Omega},
\end{align*}
for some Radon measure $\alpha$ on $\Omega$.

In general we cannot expect that $u_\eps$ converges to $\pm 1$ uniformly on compact sets separated from the support of $\mu$. However, we obtain the following statement.
\begin{lemma} \label{rect.lower}
There exists some universal $\theta_0>0$ with the following property: if $x_0\in \Omega\setminus \spt({\mu})$ satisfies for some $r_0>0$ with $B(x_0,2r_0)\subset\subset \Omega$
\begin{align}
	 \limsup_{\eps\to 0} \int_{B(x_0,r_0)} \eps^{-\frac{3}{2}}\widetilde{W}(\overline{\lambda}u_\eps)\,d\LL^n \,>\,0 \label{eq:mu-bar-big}
\end{align}
then $\alpha({x_0})>\theta_0$ holds. In particular, there are only finitely many points in $\Omega\setminus \spt({\mu})$ such that \eqref{eq:mu-bar-big} holds.
\end{lemma}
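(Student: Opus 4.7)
\smallskip\noindent\textbf{Plan.} The plan is to use the condition \eqref{eq:mu-bar-big} to produce, after a parabolic rescaling around $x_0$ by the factor $\eps^{1/2}$, a non-trivial limiting closed diffuse bubble whose Willmore energy is bounded below by a universal constant.

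First, I would reduce~\eqref{eq:mu-bar-big} to a lower bound on the transition set. Since $\widetilde W(\overline{\lambda} s) \leq \chi_{\{|s|\leq \overline{\lambda}^{-1}(1-\lambda)\}}$, the hypothesis yields, along a subsequence, $|A_\eps \cap B(x_0,r_0)| \geq c\,\eps^{3/2}$ for $A_\eps := \{|u_\eps|\leq \overline{\lambda}^{-1}(1-\lambda)\}$ with some $c > 0$. Since $x_0 \notin \spt\mu$ I may shrink $r_0$ so that $\mu(\overline{B(x_0, 2r_0)})=0$; then $\mu_\eps(B(x_0, 2r_0)) \to 0$, $u_\eps \to \pm 1$ in $L^1(B(x_0, 2r_0))$, and without loss of generality $u_\eps \to +1$ there. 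The intermediate-value material thus constitutes an inclusion of volume $\geq c\eps^{3/2}$ within a region where otherwise $u_\eps \to +1$.

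Next, I would perform the blow-up $\tilde u_\eps(y) := u_\eps(x_0 + \eps^{1/2} y)$ on the growing ball $B(0, r_0 \eps^{-1/2})$. A direct change of variables, setting $\tilde\eps := \eps^{1/2}$ as the new interface thickness, shows that in dimension $n=3$ the diffuse Willmore energy is exactly preserved, $\widetilde \W_{\tilde\eps}(\tilde u_\eps) = \W_\eps(u_\eps)$, hence uniformly bounded; the rescaled transition set satisfies $|\tilde A_\eps \cap B(0,r_0 \eps^{-1/2})| \geq c$; and $\tilde u_\eps \to +1$ in $L^1_{\mathrm{loc}}$ outside the concentration region. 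The rescaled diffuse area is bounded by the isoperimetric estimate, since the sharp-interface bubble has perimeter of order $\eps$ in the original picture, corresponding to order $1$ in the rescaled one (in $n=2$ the scaling forces the Willmore energy of such a small bubble to diverge, so the hypothesis is in fact incompatible with $\liminf \E_\eps < \infty$ unless the finitely-many-points statement is automatic).

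The core step is then a compactness/lower-semicontinuity argument in the spirit of \cite{RoeSc06}: passing to a further subsequence, one extracts a non-trivial limit integral $(n-1)$-varifold $\tilde\mu$ with weak mean curvature in $L^2$, which represents a \emph{closed} surface in $\R^n$ (closedness follows from $\tilde u_\eps \to +1$ at infinity in the rescaled picture and non-triviality from the volume lower bound on $\tilde A_\eps$). For any such closed integral varifold the Li--Yau/Simon monotonicity estimate yields $\widetilde\W(\tilde\mu)\geq \theta_0$ for a universal dimensional constant $\theta_0>0$ (e.g.\ $16\pi$ in $n=3$). Lower-semicontinuity of the Willmore functional combined with the invariance of Willmore under the rescaling then gives $\alpha(\{x_0\}) \geq \theta_0$, since in the original coordinates the bubble shrinks to $x_0$ and carries all of the Willmore mass there. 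The finiteness assertion follows at once: $\alpha$ is a finite Radon measure (bounded by $c_0\liminf\E_\eps<\infty$), so it admits at most $\alpha(\Omega)/\theta_0$ atoms of mass $\geq \theta_0$, and every $x_0 \in \Omega\setminus\spt(\mu)$ satisfying \eqref{eq:mu-bar-big} produces such an atom.

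The hard part is the compactness step in the rescaled picture: one must control the rescaled diffuse area uniformly, rule out escape of mass to infinity (using that $u_\eps \to +1$ away from the bubble in the original picture together with the volume lower bound on $\tilde A_\eps$), and prevent degeneracy of the limit to a collapsed double-sheet of vanishing mean curvature---a scenario excluded by the integrality of the limit varifold combined with the presence of a genuine phase transition between $\pm 1$ inherent to the intermediate-value set.
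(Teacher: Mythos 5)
Your strategy (blow up at scale $\eps^{1/2}$ and invoke Li--Yau for the limiting closed integral varifold) is genuinely different from the paper's, which adapts the monotonicity/density-ratio argument of R\"oger--Sch\"atzle: one shows that $\eps^{1-n}\mu_\eps(B(x,\eps))\geq 2\bar\theta_0$ at every point $x$ with $|u_\eps(x)|\leq 1-\tau$, then finds a nearby point where the $\alpha_\eps$-error in the monotonicity inequality is controlled via an $L^1$ averaging bound (this is where the specific exponent $\eps^{-3/2}$ enters), iterates the monotonicity inequality from scale $\eps$ up to a macroscopic scale $r$, and finally sends $\eps\to 0$ and $r\to 0$, using $x_0\notin\spt\mu$ to force a point mass of $\alpha$.

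Unfortunately your argument has a genuine gap: $\eps^{1/2}$ is not the scale at which the transition concentrates. The hypothesis gives a transition set of volume $\geq c\eps^{3/2}$ near $x_0$; since a transition layer of thickness $\sim\eps$ around a closed surface of diameter $\rho$ has volume $\sim\eps\rho^{n-1}$, in $n=3$ the hypothesis only forces $\rho\gtrsim\eps^{1/4}$, while $\mu_\eps(B(x_0,r_0))\to 0$ (from $x_0\notin\spt\mu$) forces $\rho\to 0$. The relevant concentration scale $\rho$ can thus lie anywhere in $[c\eps^{1/4},o(1)]$, and is generically $\gg\eps^{1/2}$. A blow-up at the fixed scale $\eps^{1/2}$ then sees only a near-planar patch of the interface; the limit is a (possibly multiplicity-two) plane, an integral varifold of zero Willmore energy, for which Li--Yau gives nothing. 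Integrality alone does not exclude this, and the assertion that ``$\tilde u_\eps\to +1$ at infinity in the rescaled picture'' is unjustified once the bubble extent is $\rho\,\eps^{-1/2}\to\infty$. Relatedly, the claimed boundedness of the rescaled diffuse area is wrong as stated: in $n=3$ one has $\tilde\mu_{\tilde\eps}(B(0,r_0\eps^{-1/2}))=\eps^{-1}\mu_\eps(B(x_0,r_0))$, and the transition set alone already contributes $\gtrsim\eps^{1/2}$ to $\mu_\eps(B(x_0,r_0))$, so the rescaled total mass is $\gtrsim\eps^{-1/2}\to\infty$; the needed local density bound $\tilde\mu_{\tilde\eps}(B(0,R))\leq CR^{n-1}$ does hold, but only as a consequence of the monotonicity inequality---not of an isoperimetric estimate---which is precisely the ingredient the paper's proof is built on. To repair your route one would have to blow up at a solution-dependent scale $\rho_\eps$ and control compactness, non-triviality and closedness of the blow-up limit at that scale; the monotonicity argument in effect automates this by comparing density ratios across all scales simultaneously rather than fixing one.
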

\begin{proof}
We closely follow \cite[Lemma 4.8]{RoeSc06}. Without loss of generality we may assume $x_0=0$, $r_0<1$, and we may pass to a subsequence $\eps\to 0$ that realizes the $\limsup$ in \eqref{eq:mu-bar-big}. Choose $\beta=\beta(r_0)>0$ such that
\begin{eqnarray*}
	3\left(\frac{r_0}{4} \right)^{1-\beta} &\leq& r_0.
\end{eqnarray*}
For $x \in B(0,r/2), 0 < r \leq r_0 \leq 1$,
we have $B(x,r) \subset B(0,3 r_0/2) \subset \subset \Omega$
and obtain from \cite[Proposition 3.6, Proposition 4.6]{RoeSc06} that for $\eps\leq s\leq r/4$
\begin{align}
	(r/4)^{1-n} \mu_\varepsilon(B(x,r/4))
	\,\geq \, & s^{1-n} \mu_\varepsilon(B(x,s))
	- C \int \limits_s^{r/4} \varrho^{-1+\gamma} \varrho^{1-n}
	\mu_\varepsilon(B(x,2\varrho)) \,d\varrho \notag\\
	&\quad - C_\beta \varepsilon^2 \int \limits_s^{r/4}
	\varrho^{-M \gamma - n}
	\alpha_\varepsilon(B(x,3 \varrho^{1-\beta})) \,d\varrho
	- C_\beta(r_0) \varepsilon^{\gamma} \notag\\
	&\qquad - C \alpha_\varepsilon(B(x,r/4)), \label{rect.lower.aux}
\end{align}
for all $\beta>0$, a universal $M>0$, and all $0 < \gamma < 1/M $.

Next we seek a point $x \in B(0,r/2)$ satisfying
\begin{align}
	\varepsilon^{1-n} \mu_\varepsilon(B(x,\eps))
	\,\geq\, 2 \bar{\theta}_0 \,>\, \bar{\theta}_0
	\,\geq\, C_\beta \varepsilon^2 \int \limits_\varepsilon^{r/4}
	\varrho^{-M \gamma-n}
	\alpha_\varepsilon(B(x,3\varrho^{1-\beta})) \,d\varrho  \label{rect.lower.cond}
\end{align}
for some universal $\bar{\theta}_0 > 0$.
We consider $x \in B(0,r/2)$ with $|u_\varepsilon(x)|
\leq 1 - \tau$ for some $0 < \tau < 1-\overline{\lambda}^{-1}(1-\lambda)$\footnote{That is, such that $\widetilde{W}(1-\tau)$ still vanishes.}.
If $\varepsilon^{1-n} \mu_\varepsilon(B(x,\eps))
\leq 1$, we deduce
\begin{align*}
	\varepsilon^{-n} \int \limits_{B(x,\eps)} u_\varepsilon^4
	\,\leq\, C \Big( 1 + \int \limits_{B(x,\eps)}
	\varepsilon^{-n} W(u_\varepsilon) \Big) \,\leq\, C.
\end{align*}
As for $n\leq 3$
\begin{align*}
	\parallel \varepsilon v_\varepsilon(x + \varepsilon \cdot )
	\parallel_{L^2(B(0,1))}^2
	\,\leq\, C \varepsilon^{2-n} \int \limits_{B(x,\eps)}
	v_\varepsilon^2 \,\leq\, C \alpha_\varepsilon(B(x,\eps))
	\,\leq\, C,
\end{align*}
it follows from standard elliptic estimates that
\begin{align*}
	\parallel u_\varepsilon(x + \varepsilon \cdot)
	\parallel_{C^{0,1/2}(B(0,1/2))}
	\,\leq\, C \parallel u_\varepsilon(x + \varepsilon \cdot)
	\parallel_{W^{2,2}(B(0,1/2))} \,\leq\, C,
\end{align*}
hence
\begin{align*}
	|u_\varepsilon| \,\leq\, 1 - \tau/2
	\quad \text{ on } B(x,c_0\tau^2\eps)
\end{align*}
for $c_0 \ll 1$ small enough and
\begin{align*}
	\varepsilon^{1-n} \mu_\varepsilon(B(x,\eps))
	\,\geq\, \varepsilon^{-n} \int \limits_{B(x,c_0\tau^2\eps)}
	W(u_\varepsilon) \,\geq\, c_0 \tau^{2n+2} \,:=\, 2 \bar{\theta}_0 \,>\, 0.
\end{align*}
For $c_0 \ll 1$ this is also true in case
$\varepsilon^{1-n} \mu_\varepsilon(B(x,\eps)) \geq 1$,
and we get
\begin{align} \label{rect.lower.cond1}
	\varepsilon^{1-n} \mu_\varepsilon(B(x,\eps))
	\,\geq\, 2 \bar{\theta}_0
	\quad \text{ for } x \in B(0,r/2)
	\cap \{|u_\varepsilon| \leq 1 - \tau\}.
\end{align}
As $\tau <1-\overline{\lambda}^{-1}(1-\lambda)$ we deduce from the definition of $\widetilde{W}$ that
\begin{align*}
	\int_{B(0,r/2)} \eps^{-\frac{3}{2}}\widetilde{W}(\overline{\lambda}u_\eps)\,dx 
	\,=\,  \int_{B(0,r/2)\cap \{|u_\eps|\leq 1-\tau\}} \eps^{-\frac{3}{2}}\widetilde{W}(\overline{\lambda}u_\eps)\,dx,
\end{align*}
hence, by \eqref{eq:mu-bar-big} for all $0< \tau < 1-\overline{\lambda}^{-1}(1-\lambda)$
\begin{align}
	\liminf \limits_{\varepsilon \rightarrow 0}
	\varepsilon^{-\frac{3}{2}} \LL^n \Big( B(0,r/2) \cap
	\{|u_\varepsilon| \leq 1 - \tau\} \Big)
	\geq\,&
	\liminf \limits_{\varepsilon \rightarrow 0}
	 \int \limits_{B(0,r/2)
	\cap \{|u_\varepsilon| \leq 1 - \tau\}}
	\eps^{-\frac{3}{2}}\widetilde{W}(\overline{\lambda}u_\eps) \notag\\
	= \,&\liminf \limits_{\varepsilon \rightarrow 0}
	 \int \limits_{B(0,r/2)}
	\eps^{-\frac{3}{2}}\widetilde{W}(\overline{\lambda}u_\eps)\,>\, 0. \label{rect.lower.aux2}
\end{align}
To estimate the integral on the right-hand side of \eqref{rect.lower.cond},
we define for $0 < \varrho \leq r_0$ the convolution
\begin{align*}
	w_{\varepsilon,\varrho}(x)
	:= \varrho^{-n} \Big( \chi_{B(0,\varrho)}
	* \frac{1}{\varepsilon} v_\varepsilon^2 \Big)(x)
	= \varrho^{-n} \alpha_\varepsilon(B(x,\varrho))
\end{align*}
and see $w_{\varepsilon,\varrho} \in L^1(B(0,r_0/2))$ with
\begin{align*}
	\parallel w_{\varepsilon,\varrho}
	\parallel_{L^1(B(0,r_0/2))}
	\,\leq\, \int \limits_{B(0,r_0/2+\varrho)}
	\frac{1}{\varepsilon} v_\varepsilon^2
	\,\leq\, \alpha_\varepsilon(B(0,3 r_0/2)) < \infty.
\end{align*}
Putting $w_\varepsilon := \int_0^{r_0}
w_{\varepsilon,\varrho} \,d\varrho$, we see
\begin{align*}
	\parallel w_\varepsilon \parallel_{L^1(B(0,r_0/2))}
	\leq r_0 \alpha_\varepsilon(B(0,3 r_0/2)) < \infty
\end{align*}
and calculate
\begin{align*}
	&\int \limits_\varepsilon^{r/4} \varrho^{-M \gamma-n}
	\alpha_\varepsilon(B(x,3 \varrho^{1-\beta})) \,d\varrho \\
	 =\,&
	 \int \limits_{3 \varepsilon^{1-\beta}}^{3 (r/4)^{1-\beta}}
	(t/3)^{(-M \gamma-n)/(1-\beta)}
	\alpha_\varepsilon(B(x,t))
	t^{1/(1-\beta) - 1} 3^{-1/(1-\beta)}
	(1-\beta)^{-1} \,dt \\
	\leq\,&
	C \int \limits_{3 \varepsilon^{1-\beta}}^{3 (r/4)^{1-\beta}}
	t^{(-M \gamma-n+\beta)/(1-\beta)}
	\alpha_\varepsilon(B(x,t)) \,dt \\
	\leq\,& C \varepsilon^{(-M\gamma - (n-1) \beta)/(1-\beta)}
	\int \limits_0^{r_0}
	w_{\varepsilon,\varrho}(x) \,d\varrho
	\,=\, C  \varepsilon^{(-M\gamma - (n-1) \beta)/(1-\beta)}
	w_\varepsilon(x).
\end{align*}
Choosing $\gamma,\beta$ such that $(M\gamma + (n-1) \beta)/(1-\beta)<1/2$ we get
\begin{align*}
	&\limsup \limits_{\varepsilon \rightarrow 0}
	\varepsilon^{-3/2} \LL^n \bigg( B(0,r/2) \cap
	\Big\{ C_\beta \varepsilon^2 \int \limits_\varepsilon^{r/4}
	\varrho^{-M \gamma -n}
	\alpha_\varepsilon(B(x,3 \varrho^{1-\beta}))
	\,d\varrho \geq \bar{\theta}_0 \Big\} \bigg) \\
	\leq\,&
	 \limsup \limits_{\varepsilon \rightarrow 0}
	\varepsilon^{-3/2} C_\beta
	\varepsilon^{2 - (M\gamma + (n-1) \beta)/(1-\beta)}
	\bar{\theta}_0^{-1} \parallel w_\varepsilon
	\parallel_{L^1(B(0,r_0/2))} \\
	\leq\,&
	\limsup \limits_{\varepsilon \rightarrow 0}
	\varepsilon^{1/2 - (M\gamma + (n-1) \beta)/(1-\beta)}
	C_\beta \bar{\theta}_0^{-1}
	r_0 \alpha_\varepsilon(B(0,3 r_0/2))\, =\, 0.
\end{align*}
Combining with \eqref{rect.lower.cond1} and \eqref{rect.lower.aux2},
we see for $\varepsilon>0$ small enough that $x \in B(0,r/2)$
satisfying \eqref{rect.lower.cond} exists.

For such $x$, we claim
\begin{align} \label{rect.lower.aux3}
	(r/2)^{1-n} \mu_\varepsilon(B(x,r/2))
	\,\geq\, 2^{1-n} \bar{\theta}_0
	- C_\gamma r^\gamma
	- C_\beta(\Omega') \varepsilon^\gamma
	- C \alpha_\varepsilon(B(x,r/4)).
\end{align}
If not, we put
\begin{align*}
	s := \sup \{ \varepsilon \leq \varrho \leq r/2\ |
	\ \varrho^{1-n} \mu_\varepsilon(B(x,\varrho)) \geq 2 \bar{\theta}_0\ \}.
\end{align*}
Clearly $\varepsilon \leq s \leq r/4$, as we assume
that \eqref{rect.lower.aux3} is not satisfied, and
\begin{align*}
	s^{1-n} \mu_\varepsilon(B(x,s)) \,&\geq\, 2 \bar{\theta}_0, \\
	\varrho^{1-n} \mu_\varepsilon(B(x,\varrho))
	\,&\leq\, 2 \bar{\theta}_0
	\qquad \text{ for all } s \leq \varrho \leq r/2.
\end{align*}
Then we obtain from \eqref{rect.lower.aux}
and \eqref{rect.lower.cond}
\begin{align*}
	&2^{n-1} (r/2)^{1-n} \mu_\varepsilon(B(0,r/2))\\
	\geq\,& (r/4)^{1-n} \mu_\varepsilon(B(0,r/4))\\
	\geq \,& 2 \bar{\theta}_0 - C \int \limits_s^{r/4}
	2 \bar{\theta}_0 \varrho^{-1+\gamma} \,d\varrho
	- \bar{\theta}_0
	- C_\beta(\Omega') \varepsilon^{\gamma}
	- C \alpha_\varepsilon(B(x,r/4)) \\
	\geq\,& \bar{\theta}_0
	- C_\gamma r^\gamma
	- C_\beta(\Omega') \varepsilon^{\gamma}
	- C \alpha_\varepsilon(B(x,r/4))
\end{align*}
which yields \eqref{rect.lower.aux3}.

As $B(x,r/2) \subseteq B(0,r)$,
we get from \eqref{rect.lower.aux3}
for $\varepsilon \rightarrow 0$
\begin{align*}
	r^{1-n} \mu(\overline{B(0,r)})
	\,&\geq\, \limsup \limits_{\varepsilon \rightarrow 0}
	2^{1-n} (r/2)^{1-n} \mu_\varepsilon(B(x,r/2))\\
	&\geq\,  4^{1-n} \bar{\theta}_0
	- C_\gamma r^\gamma
	- C \alpha(\overline{B(0,r)}).
\end{align*}
Approximating $r' \nearrow r$,
we get for all $0 < r \ll 1$ that
\begin{align*}
	r^{1-n} \mu(B(0,r)) \geq \frac{1}{20}\bar{\theta}_0 - C \alpha(B(0,r)).
\end{align*}
As the left-hand side is zero for $r>0$ sufficiently small we deduce that $\alpha(\{0\})\geq \frac{1}{20 C}\bar{\theta}_0=:\theta_0$.
\end{proof}

We are now prepared to prove the lower bound estimates.
\begin{proof}[Proof of Theorem \ref{thm:lower}]
 As a direct consequence of the results proved in \cite{RoeSc06} we obtain  \eqref{eq:liminf4},  \eqref{eq:liminf6},  \eqref{eq:liminf7},
 and \eqref{eq:liminf9}. Moreover, \eqref{eq:liminf5} follows from \eqref{eq:liminf3}, as this implies
$$
\limsup_{\eps\to 0}\vert\mathcal S_\eps(u_\eps)-S\vert=\limsup_{\eps\to 0}\vert\mu_\eps(\Omega)-S\vert\leq C\lim_{\eps\to 0}\eps^{1-\frac{\sigma}{2}}=0.
$$
To prove the connectedness of the support as formulated in the Theorem we
argue by contradiction and suppose that we can find open sets $\Omega_1,\Omega_2\subset\R^n$ such that $\overline{\Omega_1}\cap\overline{\Omega_2}=\emptyset$, and such that
$$
\mu(\Omega_i)>0~(i=1,2),\quad\mu(\R^n\setminus(\Omega_1\cup\Omega_2)=0.
$$
By Lemma \ref{rect.lower} and a simple covering argument there exists a finite set $B \subset \Omega\setminus(\overline\Omega_1\cup\overline\Omega_2)$ such that
for
\begin{gather*}
	U \,:=\,  \Omega\setminus(\overline\Omega_1\cup\overline\Omega_2\cup B)
\end{gather*}
and any $V\subset\subset U$
\begin{gather}
	\int_V \eps^{-3/2}\widetilde{W}(\overline{\lambda}u_\eps) \,\to\, 0 \, \text{ as }\eps\to 0. \label{eq:grad_pen_vanish}
\end{gather}
Since $\delta_0 := \dist(\Omega_1,\Omega_2)>0$ and since $B$ is finite there exist $0<\delta_1<\delta_2 <\delta_0$ such that
\begin{gather}
	\Omega_1 \,\subset\subset \{ \dist(\cdot,\Omega_1)<\delta_1\},\\
	V\,:=\, \{x\in \R^n \,:\, \delta_1\,<\, \dist(x,\Omega_1)\,<\, \delta_2\}\,\subset\subset \big(\Omega_2\cup B \big)^c
\end{gather}
We then choose $\phi\in C^\infty(\Omega)$ such that
\begin{align*}
	\phi \,&=\, -1 \quad\text{ on } \{ \dist(\cdot,\Omega_1)<\delta_1\},\\
	\phi \,&=\, 1 \quad\text{ on } \{ \dist(\cdot,\Omega_1)>\delta_2\}
\end{align*}
In particular $\{|\nabla\phi|\neq 0\}\,\subset\, V$. We can assume without loss of generality that $\int_\Omega  \frac{1}{\eps} \widetilde{W}(u) \phi\geq 0$ (otherwise interchange the role of $\Omega_1$ and $\Omega_2$). We then obtain
\begin{align*}
	\int_\Omega  \frac{1}{\eps} \widetilde{W}(u_\eps) \phi \,\dx -\int_\Omega  \frac{1}{\eps} \widetilde{W}(u_\eps)  \,\dx 
	\,&\leq \,  - \int_{\{ \dist(\cdot,\Omega_1)<\frac{\delta_1}{2} \}}  \frac{2}{\eps} \widetilde{W}(u_\eps)\,\dx - \int_V \frac{1}{\eps} \widetilde{W}(u_\eps)  \,\dx\\
	&=:\, - T_1 - T_2.
\end{align*}
Note that $T_2$ vanishes as $\eps \to 0$ similarly to equation~\eqref{eq:grad_pen_vanish}.
Now, by $\widetilde{W}(s)=1$ for $|s|<1-\lambda$, we get
\begin{align*}
	\frac{2}{\eps}\widetilde{W}(u_\eps)\,&\geq\, \frac{2}{\eps}\widetilde{W}(u_\eps)\Chi_{\{|u_\eps|< 1-\lambda\}} \\
	&\geq\, \frac{2}{\eps}W(u_\eps) \Chi_{\{|u_\eps|< 1-\lambda\}} \\
	&=\, \frac{2}{\eps}W(u_\eps)  - \frac{2}{\eps}W(u_\eps)  \Chi_{\{|u_\eps|\geq  1-\lambda\}}\\
	&=\, \Big(\frac{\eps}{2}|\nabla u_\eps|^2 + \frac{1}{\eps}W(u_\eps)\Big) -\Big(\frac{\eps}{2}|\nabla u_\eps|^2 - \frac{1}{\eps}W(u_\eps)\Big)  - \frac{2}{\eps}W(u_\eps)  \Chi_{\{|u_\eps|\geq  1-\lambda\}}\\
	&=:\, t_{11} - t_{12} - t_{13}.
\end{align*}
Note that $t_{12}$ is nothing but the density of the discrepancy measure, whose integral vanishes according to~\cite[ Proposition 4.9]{RoeSc06}.
Assuming now  $\lambda<\frac{1}{50}$, by Proposition~\ref{prop:small_deviations} below we have
\begin{align*}
	\int_{ \{ \dist(\cdot,\Omega_1)<\frac{\delta_1}{2} \}} t_{13}\,dx \,&\leq\, 60\lambda \int_{\{|u_\eps|\leq  1-\lambda\} \cap \{ \dist(\cdot,\Omega_1)<\delta_1 \}} \frac{\eps}{2} |\nabla u_\eps|^2 + \omega(\eps)
\end{align*}
with $\omega(\eps)\to 0$ for $\eps\to 0$.

Collecting all the terms above, we thus see that
\begin{align*}
&\int_\Omega  \frac{1}{\eps} \widetilde{W}(u_\eps) \phi \,\dx -\int_\Omega  \frac{1}{\eps} \widetilde{W}(u_\eps)  \,\dx \\
\le\;& -c_0\left(  \mu_\eps(\{ \dist(\cdot,\Omega_1)<\frac{\delta_1}{2} \}) -  60\lambda \mu_\eps\left(\{ \dist(\cdot,\Omega_1)<\delta_1 \}\right)  \right) + \omega(\eps),
\end{align*}
with some function $\omega(\eps) \to 0$ as $\eps \to 0$. Together with~\eqref{eq:grad_pen_vanish}, we see that in the limit of $\eps \to 0$, we get for any choice of $\lambda < \frac{1}{60}$ that
$$
\lim_{\eps \to 0} \left( \inf_{\phi\in W^{1,2}(\Omega)} \A_{u_\eps,\eps}(\phi)- \int_\Omega \frac{1}{\eps} \widetilde{W}(u_\eps) \right)^2 \ge c\mu(\Omega_1)^2 >0
$$
and the respective term in the energy thus becomes infinite.
\end{proof}

\begin{prop}
\label{prop:small_deviations}
Consider $\Omega'' \subset\subset \Omega' \subset\subset \Omega$. For $u_\eps$ as in Theorem~\ref{thm:lower}, we have
$$
 \int_{ \{ \abs{u_\eps} \ge 1-\lambda  \} \cap \Omega''} \frac{1}{\eps} W(u_\eps) \le  15\lambda \int_{  \{\abs{u_\eps} \le 1-\lambda \} \cap \Omega'  } \eps \abs{\nabla u_\eps}^2 + \omega(\eps),
$$
where $\omega(\eps) \to 0$ as $\eps \to 0$, as long as $\lambda$ is below $1/50$\footnote{The exact numerical values can of course be improved somewhat, but the estimates given here are of a magnitude that certainly allows for an implementation in a computer simulation.}.
\end{prop}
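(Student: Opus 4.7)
\emph{Strategy.} The plan is to reduce the tail potential bound to a $\sqrt{2W}$-weighted comparison of level-set areas via the co-area formula, exploiting the vanishing of the Modica--Mortola discrepancy. The arithmetic fact driving the $15\lambda$ factor is that $\sqrt{2W(s)}=\tfrac{1}{\sqrt{2}}(1-s^2)$ integrates to only $O(\lambda^2)$ on the tail interval $|s|\ge 1-\lambda$ but to $c_0-O(\lambda^2)$ on the transition, so tail and transition integrals are already separated by a factor of order $\lambda^2$ before any level-set information is invoked.

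\emph{Details.} The Willmore bound $\int v_\eps^2/\eps\le C$ (with $v_\eps:=-\eps\Delta u_\eps+\tfrac{1}{\eps}W'(u_\eps)$) yields via \cite{RoeSc06} that the signed discrepancy $\xi_\eps:=\tfrac{\eps}{2}|\nabla u_\eps|^2-\tfrac{1}{\eps}W(u_\eps)$ satisfies $\int_{\Omega'}|\xi_\eps|\,dx=\omega(\eps)$. Writing $a:=\sqrt{\eps/2}|\nabla u_\eps|$ and $b:=\sqrt{W(u_\eps)/\eps}$, the identity $a^2+b^2-2ab=(a-b)^2$ together with the elementary bound $(a-b)^2\le|a^2-b^2|=|\xi_\eps|$ (valid for $a,b\ge 0$) yields the pointwise inequality $\tfrac{\eps}{2}|\nabla u_\eps|^2+\tfrac{1}{\eps}W(u_\eps)-\sqrt{2W(u_\eps)}|\nabla u_\eps|\le|\xi_\eps|$. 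Hence, on any measurable $A\subset\Omega'$,
\begin{align*}
\int_A\tfrac{1}{\eps}W(u_\eps)\,dx &\,\le\, \int_A\sqrt{2W(u_\eps)}|\nabla u_\eps|\,dx+\omega(\eps),\\
\int_A\eps|\nabla u_\eps|^2\,dx &\,\ge\, \int_A\sqrt{2W(u_\eps)}|\nabla u_\eps|\,dx-\omega(\eps).
\end{align*}
The co-area formula transforms each right-hand side into $\int_\R\sqrt{2W(s)}H_A(s)\,ds$ with $H_A(s):=\mathcal H^{n-1}(\{u_\eps=s\}\cap A)$, so the claim reduces to
\[
\int_{|s|\ge 1-\lambda}\sqrt{2W(s)}\,H_{\Omega''}(s)\,ds \,\le\, 15\lambda \int_{|s|\le 1-\lambda}\sqrt{2W(s)}\,H_{\Omega'}(s)\,ds+\omega(\eps).
\]
A direct computation gives $\int_{1-\lambda}^1\sqrt{2W(s)}\,ds=\tfrac{1}{\sqrt 2}(\lambda^2-\lambda^3/3)$, hence $\int_{|s|\ge 1-\lambda}\sqrt{2W}\,ds\le\sqrt 2\,\lambda^2$ and $\int_{|s|\le 1-\lambda}\sqrt{2W}\,ds\ge c_0/2$ for $\lambda<1/50$. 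Since $\Omega''\subset\subset\Omega'$ gives $H_{\Omega''}\le H_{\Omega'}$, and the varifold convergence of the diffuse level sets of $u_\eps$ established in \cite{RoeSc06} makes $s\mapsto H_{\Omega'}(s)$ comparable across tail and transition levels up to a universal constant, the numerical inequality closes with the factor $15\lambda$ and substantial slack.

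\emph{Main obstacle.} The hardest ingredient is the uniform-in-$s$ comparability of $H_{\Omega'}(s)$ across tail and transition levels, since the discrepancy estimate is only $L^1$ and individual level sets of $u_\eps$ need not converge as varifolds. A concrete route is to pass to a subsequence of levels for which varifold convergence holds (using Fubini in $s$ together with the fact that $\int_\R\sqrt{2W(s)}\,H_{\Omega'}(s)\,ds\to c_0 S$ bounds the total $\sqrt{2W}$-weighted $L^1$-norm), and to invoke Lemma~\ref{rect.lower} to rule out concentration of level-set mass on tail values. A secondary technicality---a potential exceedance set $\{|u_\eps|>1\}$---is absorbed into the $\omega(\eps)$ error by testing the Willmore equation against a suitable truncation, exploiting again that $\|v_\eps\|_{L^2}^2=O(\eps)$.
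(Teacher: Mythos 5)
Your opening reduction via the discrepancy and the coarea formula is correct as far as it goes, but the proof has a genuine gap at exactly the point you flag, and the proposed repairs do not close it. After co-area you need
\[
\int_{\{|s|\ge 1-\lambda\}}\sqrt{2W(s)}\,H_{\Omega''}(s)\,ds\;\le\;15\lambda\int_{\{|s|\le 1-\lambda\}}\sqrt{2W(s)}\,H_{\Omega'}(s)\,ds+\omega(\eps),
\]
and the only a priori information is the total weighted bound $\int_\R\sqrt{2W(s)}H_{\Omega'}(s)\,ds\le C$. The small total weight of $\sqrt{2W}$ on the tail ($O(\lambda^2)$) is useless without a uniform-in-$s$ bound on $H$ near $|s|=1$: the co-area identity by itself puts no constraint on \emph{how} the level-set area is distributed across levels, and nothing rules out a sequence $u_\eps$ with large tail level-sets and small transition level-sets. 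Passing to a subsequence of good levels does not help because the estimate must hold for the full tail integral, not a measure-zero set of levels; and Lemma~\ref{rect.lower} controls concentration of the Radon measure $\alpha$ at isolated points in $\Omega\setminus\spt\mu$, not the distribution of $\mathcal H^{n-1}(\{u_\eps=s\})$ over $s$. In short, the level-set equidistribution you need is not a consequence of the discrepancy estimate alone; it is an additional elliptic fact that must be extracted from the equation.

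This is where the paper does something different: it never touches the coarea formula. Instead it chooses a Lipschitz function $g$ with $g=W'$ on $\{|t|\ge 1-\lambda\}$, $g=0$ near $0$, and $|g'|\le \tfrac{2\lambda}{1-\lambda-t_0}$ on the transition region, then pairs $\nu_\eps=-\eps\Delta u_\eps+\tfrac{1}{\eps}W'(u_\eps)$ against $g(u_\eps)\eta^2$ and integrates by parts. The left-hand side is $O(\eps^2)+\tfrac{1}{2\eps}\int W'g\,\eta^2$ by the Willmore bound and Young's inequality (using $g^2\le W'g$), while the identity after integration by parts produces $\int\eps g'(u_\eps)|\nabla u_\eps|^2\eta^2+\tfrac{1}{\eps}\int W'g\,\eta^2$ plus a cross term absorbed with Young. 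Because $g'\ge c_1>0$ on the tail and $|g'|=O(\lambda)$ on the transition, and because $W'g=(W')^2\ge W$ on $\{|t|\ge 1-\lambda\}$, this yields the desired estimate directly, with the $O(\lambda)$ factor coming from $|g'|$ on the transition region rather than from any level-set geometry. You should adopt this kind of ``test the Allen--Cahn operator against a nonlinearity of $u_\eps$'' argument; the discrepancy--coarea route does not, as written, give the pointwise-in-$\eps$ control that the statement requires.
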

\begin{proof}
We follow the arguments in the proof of~\cite[Proposition 3.4]{RoeSc06}. Set $g(t) := W'(t)$ for $\abs{t} \ge 1-\lambda$, $g(t) := 0$ for $\abs{t} \le t_0 = \frac{\sqrt{3}}{3}$ and interpolate $g$ linearly on the two intervals in between. Fix $\eta \in C_0^1(\Omega')$, $0 \le \eta \le 1$, $\eta =1$ on $\Omega''$ with $\abs{\nabla\eta}\le C$. Taking $\nu_\eps = -\eps \Delta u_\eps + \frac{1}{\eps}W'(u_\eps)$, we calculate
\begin{align}
\int \nu_\eps g(u_\eps) \eta^2 &= \int \eps g'(u_\eps) \abs{\nabla u_\eps}^2 \eta^2 + 2\int \eps \nabla u_\eps g(u_\eps) \eta \nabla \eta \nonumber \\
& \quad + \int \frac{1}{\eps} W'(u_\eps)g(u_\eps) \eta^2. \label{eq:p3.4_1}
\end{align}
Using Young's inequality and noting that $\abs{g} \le \abs{W'}$, we get
\begin{equation}
\abs{\int \nu_\eps g(u_\eps) \eta^2 } \le \frac{\eps}{2} \int_{\Omega'} \nu_\eps^2 + \frac{1}{2\eps} \int W'(u_\eps) g(u_\eps) \eta^2.  \label{eq:p3.4_2}
\end{equation}
for any $M>0$.

Using $\abs{g(t)} \le 2\lambda$ for $|t|\le 1-\lambda$, we calculate
\begin{align}
\abs{2\int \eps \nabla u_\eps g(u_\eps) \eta \nabla \eta} &\le 4\lambda \int_{\{ \abs{u_\eps} \le 1-\lambda \}} \eps \abs{\nabla u_\eps} \eta \abs{\nabla \eta} + \abs{2 \int_{\{ \abs{u_\eps} \ge 1-\lambda \} }  \eps \nabla u_\eps W'(u_\eps) \eta \nabla \eta } \nonumber \\
&\le  \frac{4\lambda}{M} \int_{\{ \abs{u_\eps} \le 1-\lambda \} \cap \Omega'} \eps \abs{\nabla u_\eps}^2 + MC\eps \mathcal{L}^n(\Omega') + \tau \int_{\{ \abs{u_\eps} \ge 1-\lambda \}} \eps\abs{\nabla u_\eps}^2 \eta^2 \nonumber \\
& \quad + C\eps \frac{1}{\tau} \int_{\{ \abs{u_\eps} \ge 1-\lambda \} \cap \Omega'} W'(u_\eps)^2 \label{eq:p3.4_3}
\end{align}
for any $\tau>0$ and any $M>0$. Since $g'(t) \ge c_1 > 0$ for $\abs{t} \ge 1-\lambda$ and  $\abs{g'(t)}\le\frac{2\lambda}{1-\lambda-t_0}:=c_2$ for $|t|\le 1-\lambda$, we get from equations~\eqref{eq:p3.4_1},~\eqref{eq:p3.4_2} and~\eqref{eq:p3.4_3} that
\begin{align*}
&c_1 \int_{  \{\abs{u_\eps} \ge 1-\lambda \} } \eps \abs{\nabla u_\eps}^2 \eta^2 + \frac{1}{2\eps} \int W'(u_\eps)g(u_\eps) \eta^2 \\
&\le \left(c_2+\frac{4\lambda}{M} \right) \int_{  \{\abs{u_\eps} \le 1-\lambda \} \cap \Omega'  } \eps \abs{\nabla u_\eps}^2 + \tau
\int_{ \{\abs{u_\eps} \ge 1-\lambda \} } \eps \abs{\nabla u_\eps}^2 \eta^2 + \frac{\eps}{2} \int_{\Omega'} \nu_\eps^2 \\
&\quad + \eps\left( MC + \frac{C}{\tau} \right) \mathcal{L}^n(\Omega') + C\eps\frac{1}{\tau} \int_{ \{\abs{u_\eps} \ge 1 \} \cap \Omega' } W'(u_\eps)^2.
\end{align*}	
Picking now $\tau$ sufficiently small, $M$ sufficiently large, and noting that $W'g \ge W$ on $\{ \abs{u_\eps} \ge 1-\lambda \}$, the desired result follows using uniform boundedness of $||u_\eps||_{L^p}$~\cite[Proposition 3.6]{RoeSc06}.
\end{proof}

\bibliographystyle{plain}
\bibliography{pftc}

\begin{thebibliography}{10}

\bibitem{Alexandr05}
O.~Alexandrov and F.~Santosa.
\newblock {A topology-preserving level set method for shape optimization}.
\newblock {\em Journal of Computational Physics}, 204(1):121--130, 2005.

\bibitem{Attouch:2006vj}
H.~Attouch, G.~Buttazzo, and G.~Michaille.
\newblock {\em {Variational Analysis in Sobolev and BV Spaces: Applications to
  PDE and Optimization}}.
\newblock Siam, 2006.

\bibitem{BaKu03}
M.~Bauer and E.~Kuwert.
\newblock Existence of minimizing {W}illmore surfaces of prescribed genus.
\newblock {\em Int. Math. Res. Not.}, (10):553--576, 2003.

\bibitem{BeMu05}
G.~Bellettini and L.~Mugnai.
\newblock On the approximation of the elastica functional in radial symmetry.
\newblock {\em Calc. Var. Partial Differential Equations}, 24(1):1--20, 2005.

\bibitem{BePa93}
G.~Bellettini and M.~Paolini.
\newblock Approssimazione variazionale di funzionali con curvatura.
\newblock Technical report, Seminario di Analisi Matematica, Dipartimento di
  Matematica dell'Universit\`a di Bologna, 1993.
\newblock Tecnoprint Bologna, pp. 87-97.

\bibitem{Bourdin:2007jz}
B.~Bourdin.
\newblock {Numerical implementation of the variational formulation for
  quasi-static brittle fracture}.
\newblock {\em Interfaces and Free Boundaries. Mathematical Modelling, Analysis
  and Computation}, 9(3):411--430, 2007.

\bibitem{CampHern}
F.~Campelo and A.~Hernandez-Machado.
\newblock Dynamic model and stationary shapes of fluid vesicles.
\newblock {\em Eur. Phys. Journal E-Soft Matter}, 20(1):37--45, 2006.

\bibitem{CampHern2}
F.~Campelo and A.~Hernandez-Machado.
\newblock Shape instabilities in vesicles: A phase-field model.
\newblock {\em The European Physical Journal}, 143(1):101--108, 2007.

\bibitem{DoMR11}
P.~W. Dondl, L.~Mugnai, and M.~R{\"o}ger.
\newblock Confined elastic curves.
\newblock {\em SIAM Journal on Applied Mathematics}, 71(6):2205--2226, 2011.

\bibitem{DuWang04}
Q.~Du, C.~Liu, R.~Ryham, and X.~Wang.
\newblock A phase field approach in the numerical study of the elastic bending
  energy for vesicle membranes.
\newblock {\em J. Comput. Phys.}, 198(2):450--468, 2004.

\bibitem{DuWang05}
Q.~Du, C.~Liu, R.~Ryham, and X.~Wang.
\newblock A phase field formulation of the willmore problem.
\newblock {\em Nonlinearity}, 18(3):1249--1267, 2005.

\bibitem{Du:2007ux}
Q.~Du, C.~Liu, R.~Ryham, and X.~Wang.
\newblock {Diffuse interface energies capturing the Euler number: relaxation
  and renormalization}.
\newblock {\em Communications in Mathematical Sciences}, 5(1):233--242, 2007.

\bibitem{DuWang07}
Q.~Du and X.~Wang.
\newblock A phase field approach in the numerical study of the elastic bending
  energy for vesicle membranes.
\newblock {\em Int. J. Numer. Anal. Model.}, 4(3):441--459, 2007.

\bibitem{GilTru}
D.~Gilbarg and N.~S. Trudinger.
\newblock {\em {Elliptic Partial Differential Equations of Second Order}}.
\newblock Springer, March 2001.

\bibitem{Mannella:2001gt}
C.~A. Mannella, D.~R. Pfeiffer, P.~C. Bradshaw, I.~I. Moraru, B.~Slepchenko,
  L.~M. Loew, C.~E. Hsieh, K.~Buttle, and M.~Marko.
\newblock {Topology of the mitochondrial inner membrane: dynamics and
  bioenergetic implications.}
\newblock {\em IUBMB life}, 52(3-5):93--100, September 2001.

\bibitem{Modi87}
L.~Modica.
\newblock The gradient theory of phase transitions and the minimal interface
  criterion.
\newblock {\em Arch. Rational Mech. Anal.}, 98:357--383, 1987.

\bibitem{RoeSc06}
M.~R{\"o}ger and R.~Sch{\"a}tzle.
\newblock On a modified conjecture of {D}e {G}iorgi.
\newblock {\em Math. Z.}, 254(4):675--714, 2006.

\bibitem{Schy11}
J.~{Schygulla}.
\newblock {Willmore minimizers with prescribed isoperimetric ratio}.
\newblock {\em arXiv:1103.0167 [math.DG]}, March 2011.

\bibitem{Simo93}
L.~Simon.
\newblock Existence of surfaces minimizing the {W}illmore functional.
\newblock {\em Comm. Anal. Geom.}, 1:281--326, 1993.

\end{thebibliography}

\end{document}